\theoremstyle{plain}
\newcounter{new}
\newtheorem{Th}{Theorem}
\newtheorem{Le}[new]{Lemma}
\newtheorem{Pro}[new]{Proposition}
\newtheorem{Cor}[new]{Corollary}
\theoremstyle{definition}
\newtheorem{Def}[]{Definition}
\theoremstyle{remark}
\newtheorem{Rem}[new]{Remark}
\DeclareMathOperator{\argmin}{argmin}
\DeclareMathOperator{\supp}{supp}
\DeclareMathOperator{\Lip}{Lip}
\newcommand{\N}{\mathbb{N}}
\newcommand{\R}{\mathbb{R}}
\newcommand{\Z}{\mathbb{Z}}
\newcommand{\prob}{\mathbb{P}}
\newcommand{\eps}{\epsilon}
\newcommand{\F}{\mathfrak{F}}
\newcommand{\G}{\mathcal{G}}
\newcommand{\mask}{(a_i)_{i\in\Z^s}}
\newcommand{\fa}{\mathbf{a}}
\title[Nonlinear Markov semigroups and refinement schemes]{Nonlinear Markov semigroups and\linebreak \ refinement schemes on metric spaces.}
\author{Oliver Ebner}
\address{Oliver Ebner,
         Institute of Geometry, TU Graz,
         Kopernikusgasse 24/IV, A-8010 Graz, Austria.}
\email{o.ebner@tugraz.at}
\thanks{The author is supported by the Austrian science fund, grants W1230 and P19870.}
\keywords{Hadamard space; conditional expectation; Markov chain; barycentric subdivision scheme}
\subjclass[2010]{53C23, 60J20, 65D17}
\begin{document}

\begin{abstract}
This article settles the convergence question for multivariate barycentric subdivision
schemes with nonnegative masks on complete metric spaces of nonpositive
Alexandrov curvature, also known as Hadamard spaces. We establish a link
between these types of refinement algorithms and the theory of Markov chains by
characterizing barycentric subdivision schemes as nonlinear Markov semigroups.
Exploiting this connection, we subsequently prove that any such scheme
converges on arbitrary Hadamard spaces if and only if it converges for real
valued input data. Moreover, we generalize a characterization of
convergence from the linear theory, and consider approximation qualities of
barycentric subdivision schemes. A concluding section addresses the
relationship between the convergence properties of a scheme and its so-called
characteristic Markov chain.
\end{abstract}

\maketitle
\section*{Introduction and main results}

The convergence and smoothness analysis of refinement schemes processing data from manifolds and more generally metric spaces has been a subject of intense research over the last few years. As to convergence, complete spaces of nonpositive curvature have proven most accessible in terms of generalizing 
well-known facts from the linear theory to the nonlinear setting. An example of such a structure prominent in applications is the space of positive definite symmetric matrices, which represent measurements in diffusion tensor imaging.\

While the question whether the \emph{smoothness} properties of the linear model scheme prevail when passing to the nonlinear setting was successfully addressed in \cite{Grohs}, the corresponding \emph{convergence} problem remained unsolved. The aim of this article is to fill this gap in the theory.\

Relying on a martingale theory for discrete-time stochastic processes with values in negatively curved spaces developed in \cite{Sturm2}, we observe that the refinement processes in question actually act on bounded input data as nonlinear Markov semigroups. This fact will substantially facilitate their convergence analysis.\

Let us specify the general setup. Given a metric space $(X,d)$, a \emph{refinement scheme} is a map $S:\ell^\infty(\Z^s,X)\to \ell^\infty(\Z^s,X)$. We call $S$ \emph{convergent} if for all $x\in\ell^\infty(\Z^s,X)$ there exists a continuous function $S^\infty x:\R^s\to X$ such that \begin{equation}\label{eqn:converge}d_\infty(S^\infty x(\cdot/2^n),S^nx)=\sup_jd(S^\infty x(j/2^n),S^nx_j)\to 0\quad\text{ as }n\to\infty.\end{equation}
Visualizing $S^nx$ as a function on the refined grid $2^{-n}\Z^s$, convergence to $S^\infty x$ is tantamount to $d_\infty(S^nx,f|_{2^{-n}\Z^s})\to0$.  

\pagebreak

Throughout the present paper we are mostly concerned with so-called \emph{barycentric} refinement schemes associated to nonnegative real-valued $s$-variate sequences $\mask$ of finite support, henceforth referred to as \emph{masks}, which we require to fulfill the \emph{basic sum rule}

\begin{equation}\label{eqn:sumrule}\sum_{j\in\Z^s}a_{i-2j}=1\quad\text{for }i\in\Z^s.\end{equation}
Barycentric refinement schemes act on data $x\in \ell^\infty(\Z^s,X)$ from a complete metric space of nonpositive curvature in the sense of A. D. Alexandrov according to the following rule:

\begin{equation}\label{eqn:sub}Sx_i=\argmin\left(\sum_{j\in\Z^s}a_{i-2j}d^2(x_j,\,\cdot\,)\right).\end{equation}
Much is known about the convergence of these type of refinement algorithms in the case $X=\R$. On complete manifolds of nonpositive sectional curvature
convergence analysis was initiated in the article \cite{WNW}. The author in \cite{Ebner} recently proved general convergence statements for arbitrary Hadamard spaces using the principle of \emph{contractivity}: A scheme is called contractive with respect to some nonnegative function 
$D:\ell^\infty(\Z^s,X)\to\R_+$ if and only if there is $\gamma<1$ such that $$D(Sx)<\gamma D(x),\quad\text{for all}\quad x\in\ell^\infty(\Z^s,X).$$ The function $D$ is referred to as a \emph{contractivity function} for $S$. An important class of contractivity functions is associated to balanced, convex and bounded subsets $\Omega$ of $\R^s$:

\begin{equation}\label{eqn:contrfct}D_\Omega(x)=\sup_{\rho(i-j)<2} d(x_i,x_j),\end{equation}
where $\rho$ denotes the Minkowski functional of $\Omega$. Contractivity functions of this type are called \emph{admissible}, cf. \cite{Ebner}. The following result is taken from loc. cit.:

\begin{Pro}
A barycentric refinement scheme with nonnegative mask which is contractive with respect to some admissible contractivity function also converges. This implies convergence in case the support of the mask coincides with the set of lattice points within a centered unimodular zonotope or a lattice quad with nonempty interior.
\end{Pro}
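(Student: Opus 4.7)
The plan is to derive convergence from contractivity in full generality, and then verify the contractivity hypothesis in the two geometric situations singled out at the end of the statement.

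For the first step, contractivity yields $D_\Omega(S^n x)\le\gamma^n D_\Omega(x)$, so the oscillation of $S^n x$ over $\rho$-balls of radius $2$ decays geometrically in $n$. To turn this into uniform convergence in the sense of \eqref{eqn:converge}, I would first establish a \emph{proximity inequality}: for a constant $C$ depending only on $\fa$ and $\Omega$, $d(Sx_i,x_j)\le C\,D_\Omega(x)$ whenever $a_{i-2j}\neq 0$. This follows from the variance inequality for barycenters in Hadamard spaces, $\sum_j a_{i-2j}d^2(Sx_i,x_j)\le\tfrac12\sum_{j,k}a_{i-2j}a_{i-2k}d^2(x_j,x_k)$, combined with the fact that $\supp\fa$ is bounded in the $\rho$-metric and that the weights $a_{i-2j}$ sum to one by \eqref{eqn:sumrule}. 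Chaining this with contractivity gives $d(S^{n+1}x_i,S^n x_{\lfloor i/2\rfloor})\le C\gamma^n D_\Omega(x)$, so the sequence of functions $F_n\colon 2^{-n}\Z^s\to X$, $F_n(j/2^n):=S^n x_j$, is Cauchy in $d_\infty$ on the dense set of dyadic rationals. The equicontinuity forced by $D_\Omega(S^n x)\to 0$ then permits the limit to be extended to a continuous function $S^\infty x\colon\R^s\to X$ satisfying \eqref{eqn:converge}.

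For the second step, I would construct an admissible $\Omega$ adapted to the mask support. If $\supp\fa$ consists of the lattice points of a centered unimodular zonotope $Z=\sum_{k=1}^N[-v_k,v_k]$ with the $v_k\in\Z^s$ forming a unimodular configuration, take $\Omega:=Z$ (or a small dilate). Any difference $d(Sx_i,Sx_{i'})$ with $\rho(i-i')<2$ can then be decomposed along the generators $v_k$, and for each elementary step the sum rule together with nonnegativity of the mask and convexity of the squared distance on a Hadamard space yields $d(Sx_i,Sx_{i+v_k})\le\gamma_k D_\Omega(x)$ with $\gamma_k<1$. The lattice-quad case is analogous, with $\Omega$ the symmetric convex hull of the quad; the nonempty-interior hypothesis ensures a genuine contraction in each coordinate direction. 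Setting $\gamma:=\max_k\gamma_k$ delivers the required contractivity, and the first step applies.

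The main difficulty is the proximity estimate and the control of barycenter differences: in a general Hadamard space differences cannot be combined linearly, so one has to work through the variance inequality and convexity of the distance function to bound barycenters of neighbouring masks. Establishing the elementary contraction $\gamma_k<1$ for zonotopes and quads is the other delicate point — it ultimately rests on the combinatorial geometry of $\supp\fa$, and this is where unimodularity (respectively the nonempty-interior assumption) really enters.
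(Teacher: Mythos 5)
Note first that the paper does not actually prove this Proposition itself: it is imported verbatim from \cite{Ebner}, and the closest in-paper analogue is Theorem \ref{Th:converge} together with Lemmas \ref{Le:contr} and \ref{Le:B-spline}. Measured against that, the first half of your argument (contractivity $\Rightarrow$ convergence) is essentially sound but takes a genuinely different route. The paper deduces convergence by comparing $S$ with an auxiliary convergent scheme $T\in\Lip_1$ (the tensor power of the midpoint mask) via the proximity bound $d_\infty(Sx,Tx)\le C\,D(x)$, and shows that the \emph{continuous} functions $f_n=T^\infty(S^nx)(2^n\cdot)$ are uniformly Cauchy. You instead run the classical dyadic argument directly on the discrete data. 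Your proximity inequality $d(Sx_i,x_j)\le C\,D_\Omega(x)$ for $a_{i-2j}\neq0$ does follow from the variance inequality and the sum rule \eqref{eqn:sumrule}, and the consistency estimate $d(S^{n+1}x_i,S^nx_{\lfloor i/2\rfloor})\le C\gamma^nD_\Omega(x)$ is then correct. What buys you a continuous limit, however, is not "Cauchy on the dense set of dyadic rationals" as such (the $F_n$ live on different grids), but the uniform H\"older modulus one extracts by applying $D_\Omega(S^mx)\le\gamma^mD_\Omega(x)$ at the scale $2^{-m}$ matching the separation of two dyadic points; this should be spelled out before the extension to $\R^s$ is invoked. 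With that caveat, step one is an acceptable, more elementary alternative to the paper's auxiliary-scheme argument.

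The second half has a genuine gap, and its concluding step is incorrect as written. The entire difficulty of the zonotope and lattice-quad cases is the verification that for $\rho(i-i')<2$ the two weight systems $(a_{i-2j})_{j}$ and $(a_{i'-2j})_{j}$ share a positive amount of common mass, so that an NPC comparison lemma for barycenters of overlapping weight systems yields a factor strictly less than $1$; this is exactly where the hypotheses that the support \emph{coincides with} (not merely lies inside) the set of lattice points of the zonotope, and that the zonotope is unimodular (resp. that the quad has nonempty interior), enter. You assert the elementary estimate $d(Sx_i,Sx_{i+v_k})\le\gamma_kD_\Omega(x)$ with $\gamma_k<1$ without any argument, which is precisely the content to be proved. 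Moreover, even granting these elementary bounds, setting $\gamma:=\max_k\gamma_k$ does not deliver contractivity: a general pair $i,i'$ with $\rho(i-i')<2$ decomposes into \emph{several} generator steps, and the triangle inequality then accumulates the elementary constants additively, giving a bound of the form $\bigl(\sum_k\gamma_k\bigr)D_\Omega(x)$, which need not be $<1$. One needs a single-step common-mass estimate valid for \emph{every} pair with $\rho(i-i')<2$, or some other mechanism preventing this accumulation; without it the claimed contractivity does not follow.
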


The main result of the present article is a substantial extension of this statement and describes a phenomenon which could be referred to as \emph{linear equivalence}:

\begin{Th}\label{Th:main}A barycentric refinement scheme converges on arbitrary Hadamard spaces if and only if it converges on the real line.
\end{Th}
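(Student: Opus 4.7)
The ``only if'' direction is immediate, as $\R$ is itself a Hadamard space. For the converse I would follow the Markov semigroup philosophy announced in the introduction. Writing the $n$-fold iterate as an iterated barycenter,
$$
S^n x_i \;=\; b\Bigl(\sum_{j\in\Z^s} a^{[n]}_{i,j}\,\delta_{x_j}\Bigr),
$$
where $(a^{[n]}_{i,j})$ denotes the $n$-step transition kernel of the Markov chain on $\Z^s$ with one-step transitions $p(i,j)=a_{i-2j}$, the crucial observation is that these coefficients agree with those describing the linear iterate $S^n$ in the special case $X=\R$. Combining the representation with Sturm's $W_1$-contraction of the barycenter in Hadamard spaces, one obtains
$$
d(S^nx_i,S^nx_{i'}) \;\le\; \sum_{(j_1,j_2)} \pi(j_1,j_2)\,d(x_{j_1},x_{j_2})
$$
for every coupling $\pi$ of the discrete probability measures $\mu_{n,i}:=\sum_j a^{[n]}_{i,j}\delta_j$ and $\mu_{n,i'}$ on $\Z^s$.

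The core of the proof is then to exploit convergence on $\R$ in order to produce a favourable such coupling. Applied to the scalar datum $\cf_{\{0\}}$, convergence yields a continuous, compactly supported basic limit function $\phi=S^\infty\cf_{\{0\}}$ and the uniform asymptotic
$$
a^{[n]}_{i,j}\;=\;\phi(i/2^n-j)+o(1),\qquad n\to\infty.
$$
Uniform continuity of $\phi$ then implies that for every fixed $v\in\Z^s$, $\|\mu_{n,i}-\mu_{n,i+v}\|_{TV}\to 0$ uniformly in $i$. Since both measures are supported on translates of a fixed compact subset of $\Z^s$, I would moreover arrange a coupling that differs from the diagonal only on a set of mass $\epsilon_n\to 0$ and, on that set, transports mass along chains of uniformly bounded $\rho$-length. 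Bounding $d(x_{j_1},x_{j_2})$ along such chains by a constant multiple of $D_\Omega(x)$ via the triangle inequality, and plugging the resulting coupling into the Wasserstein estimate above, yields
$$
D_\Omega(S^nx)\;\le\;C\,\epsilon_n\,D_\Omega(x).
$$
Choosing $n_0$ with $C\epsilon_{n_0}<1$ produces contractivity of the iterate $S^{n_0}$ with respect to the admissible function $D_\Omega$, from which the Proposition cited above (applied to $S^{n_0}$) delivers the convergence of $S$ on $X$.

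I expect the principal technical obstacle to lie in the construction of this ``local'' coupling: one has to ensure not only that the overall off-diagonal mass is small but also that the transport can be organised so as to travel only bounded distances in $\Z^s$, uniformly in $n$. Both ingredients are rooted in the asymptotic smoothness $a^{[n]}_{i,j}\approx\phi(i/2^n-j)$ conferred by convergence on $\R$, but combining them quantitatively will require a careful analysis of how the supports of the measures $\mu_{n,i}$ overlap for neighbouring $i$.
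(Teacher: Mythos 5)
Your ``only if'' direction is correct ($\R$ is a Hadamard space), and your overall philosophy --- turn convergence on $\R$ into contractivity of an admissible function $D_\Omega$ via the asymptotics $a^{(n)}_{i-2^nj}\approx\varphi(i/2^n-j)$ of the $n$-step transition probabilities --- is indeed the paper's. But your argument rests on a representation that is false, and the paper is explicitly organized around the failure of exactly this identity. You write $S^nx_i$ as a \emph{single} barycenter of the measure $\sum_j a^{(n)}_{i-2^nj}\delta_{x_j}$. In a Hadamard space the $n$-fold iterate is the \emph{iterated} barycenter $E(x\circ X^\fa_n|||\F_0)$ (Theorem \ref{Pro:main}), and because nonlinear conditional expectation lacks the smoothing/tower property \eqref{eqn:smoothing}, this does \emph{not} collapse to $\argmin\sum_j a^{(n)}_{i-2^nj}d^2(x_j,\cdot)$; the discussion after Theorem \ref{Th:lin} points out that if it did, the whole theorem would be immediate. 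Consequently your Wasserstein estimate over \emph{arbitrary} couplings $\pi$ of $\mu_{n,i}$ and $\mu_{n,i'}$ is unjustified. What the iterated structure actually yields, via Jensen's inequality \eqref{eqn:Jensen} applied twice (once against a fixed point, once against $S^nx_j$), is the estimate for the \emph{product} coupling only:
\begin{equation*}
d(S^nx_i,S^nx_j)\le\sum_{k,\ell}a^{(n)}_{i-2^nk}\,a^{(n)}_{j-2^n\ell}\,d(x_k,x_\ell).
\end{equation*}
Your ``mostly diagonal'' coupling with off-diagonal mass $\epsilon_n\to0$ cannot be substituted here: a near-optimal coupling of the terminal laws is not realizable as an adapted coupling of the two chains compatible with the filtrations along which the iterated barycenters are taken. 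This kills the bound $D_\Omega(S^nx)\le C\epsilon_n D_\Omega(x)$. The paper's Lemma \ref{Le:contr} salvages the weaker but sufficient conclusion from the product coupling: the diagonal mass is $\psi(i/2^n,j/2^n)=\sum_m\varphi(i/2^n-m)\varphi(j/2^n-m)$, which by uniform continuity of $\varphi$ and the partition of unity \eqref{eqn:pou} is bounded below by some $\alpha_n>\eps>0$; since diagonal terms contribute zero, one gets $D_\Omega(S^nx)\le(1-\alpha_n+O(\eps_n))D_\Omega(x)$, i.e.\ a contraction factor strictly less than $1$ but not tending to $0$.

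A second, smaller gap: having contractivity only of the iterate $S^{n_0}$, you invoke the quoted Proposition ``applied to $S^{n_0}$.'' But $S^{n_0}$ is not a dilation-$2$ barycentric scheme, and convergence of the subsequence $S^{n_0 m}x$ does not by itself control the intermediate iterates. The paper handles this by proving the weak-contractivity convergence criterion, Theorem \ref{Th:converge}, which additionally requires a convergent $1$-Lipschitz comparison scheme $T$ (the tensor-power B-spline scheme of Lemma \ref{Le:B-spline}) with the proximity bound $d_\infty(Sx,Tx)\le C\cdot D(x)$. Your proposal omits this ingredient entirely; it is needed to pass from decay of $D_\Omega(S^nx)$ to actual uniform convergence of $S^nx$ to a continuous limit.
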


The proof of this fact, given in Section \ref{sec:convergence}, relies on a stochastic interpretation of the subdivision rule \eqref{eqn:sub}. More precisely, for each nonnegative mask $\mathbf{a}=\mask$ satisfying the basic sum rule \eqref{eqn:sumrule} one finds a so-called \emph{characteristic} Markov chain $X_n^{\mathbf{a}}$ with state space $\Z^s$ and transition matrix $(a_{i-2j})_{i,j\in\Z^s}$ in terms of which the iterates of the refinement algorithm acting on $x\in\ell^\infty(\Z^s,X)$ may be written as
$$S^nx_i=E(x\circ X^{\mathbf{a}}_n|||X^{\mathbf{a}}_0=i),$$
see Theorem \ref{Pro:main}. Here $E(\,\cdot\,|||X_0)$ denotes the \emph{filtered conditional expectation} introduced by K.-T. Sturm in \cite{Sturm2}. Thus, as in the linear case, 
$$
\begin{cases}
\N_0\to\Lip_1(\ell^\infty(\Z^s,X));\\
n\mapsto S^n
\end{cases}
$$
may be considered a (nonlinear) Markov semigroup. Here $\Lip_1(\ell^\infty(\Z^s,X))$ refers to the set of maps $T:\ell^\infty(\Z^s,X)\to\ell^\infty(\Z^s,X)$ satisfying the Lipschitz condition
\begin{equation}\label{eqn:Lip}d_\infty(Tx,Ty)\le d_\infty(x,y)\quad\text{for }x,y\in\ell^\infty(\Z^s,X).\end{equation}
Combining Theorem \ref{Th:main} with other recent developments in the theory of linear subdivision schemes with nonnegative masks and their barycentric counterparts on nonlinear objects, one comes up with a variety of remarkable results:

In the articles \cite{Zhou2} and \cite{Zhou}, X. Zhou establishes general theorems on the relation of the mask's support with its convergence properties, which, utilizing Theorem \ref{Th:main} now generalize to the following:

\begin{Th}
A barycentric subdivision scheme $S:\ell^\infty(\Z^s,X)\to \ell^\infty(\Z^s,X)$ with nonnegative mask converges under each of the following circumstances:

(i) The support of $\mask$ coincides with the set of grid points inside a balanced zonotope.

(ii) The grid dimension $s=1$ and, if, after a possible index translation, $(a_i)_{i\in\Z}=(\dots,0,0,a_0,\dots,a_N,0,0,\dots)$, the integers within the support are relatively prime and $0<a_0,a_1<1$. This also constitutes a necessary condition for convergence.
\end{Th}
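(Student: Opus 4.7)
The plan is to deduce this theorem as a direct corollary of Theorem \ref{Th:main} combined with the results of X. Zhou on the convergence of linear subdivision schemes with nonnegative masks \cite{Zhou2,Zhou}. The key preliminary observation is that on the Hadamard space $\R$, the barycentric rule \eqref{eqn:sub} reduces to the classical linear rule $Sx_i=\sum_{j\in\Z^s} a_{i-2j}x_j$, because the weighted Fréchet mean in a Euclidean space coincides with the weighted arithmetic mean, and the basic sum rule \eqref{eqn:sumrule} guarantees that the weights sum to one. Consequently the convergence behaviour of the barycentric scheme on $\R$ is literally that of the associated classical linear subdivision scheme, so the linear-theory results apply verbatim.

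For (i) I would invoke the main result of \cite{Zhou2}, which asserts that any linear subdivision scheme with nonnegative mask supported on the lattice points inside a balanced zonotope converges in the usual uniform sense on $\R$. Theorem \ref{Th:main} then upgrades this real-line convergence to convergence on an arbitrary Hadamard space, thereby proving (i). Note that this is a strict improvement over the earlier Proposition, which only covers the special cases of centered unimodular zonotopes and lattice quads.

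For (ii) I would appeal to Zhou's univariate characterization in \cite{Zhou}, which establishes an equivalence on $\R$ between convergence of the scheme and the two conditions that the indices in $\supp(\mathbf{a})$ be coprime and that $0<a_0,a_1<1$. The sufficiency direction lifts to arbitrary Hadamard spaces through Theorem \ref{Th:main}. Conversely, since $\R$ is itself a Hadamard space, convergence on all Hadamard spaces forces in particular convergence on $\R$; the necessity asserted by \cite{Zhou} then applies without further work. The only technical point worth checking — and the sole place where an obstacle could conceivably arise — is compatibility between the notion of convergence \eqref{eqn:converge} used here and that employed in \cite{Zhou2,Zhou}; but the two notions are known to coincide in the classical linear setting, so this verification is entirely routine.
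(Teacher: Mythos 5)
Your proposal is correct and follows exactly the route the paper intends: the theorem is presented there as an immediate consequence of Theorem \ref{Th:main} together with Zhou's convergence results for linear schemes with nonnegative masks, and your preliminary observation that the barycentric rule \eqref{eqn:sub} reduces on $\R$ to the classical linear rule is precisely the content of the identity recorded in Theorem \ref{Th:lin}. Nothing further is needed.
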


Moreover, as far as finite-dimensional Hadamard manifolds are concerned, the smoothness question is settled by a combination of Theorem \ref{Th:main} and recent work from \cite{Grohs}:

\begin{Th}On a smooth Hadamard \emph{manifold}, a barycentric subdivision scheme $S:\ell^\infty(\Z^s,X)\to \ell^\infty(\Z^s,X)$ with nonnegative mask converges and produces $r$-times differentiable limit functions if and only if the same is true for the corresponding linear scheme.
\end{Th}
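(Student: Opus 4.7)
The plan is to split the bi-implication into its two constituent assertions—convergence and $C^r$ smoothness—and treat each by combining Theorem \ref{Th:main} (convergence equivalence) with the smoothness transfer theorem from \cite{Grohs}.

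For the direction asserting that the properties of the linear scheme descend to the nonlinear one on the Hadamard manifold $X$, convergence is immediate: a smooth Hadamard manifold is in particular a complete Alexandrov space of nonpositive curvature, so convergence of the linear scheme on $\R$ forces convergence of the nonlinear scheme on $X$ by Theorem \ref{Th:main}. Once convergence is secured, I would invoke the smoothness result of \cite{Grohs}, which—granted convergence of the manifold-valued iteration—promotes $C^r$ smoothness of the linear limit function to $C^r$ smoothness of its nonlinear counterpart. The only verification required is that the hypotheses under which Grohs proves the smoothness transfer (typically a proximity condition relating the nonlinear barycentric rule to its linear model) are met for the barycentric schemes considered here, together with the standing convergence assumption.

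For the reverse direction I would reduce to a one-dimensional, isometrically embedded subspace. A complete geodesic $\gamma\colon\R\to X$ in a Hadamard manifold has closed convex image, and for input data of the form $x_i=\gamma(t_i)$ the unique minimizer in \eqref{eqn:sub} is forced to lie on $\gamma$: if the minimizer $y^*$ were off the geodesic, the nearest-point projection $P_\gamma(y^*)$ would strictly decrease every squared distance $d^2(x_i,\,\cdot\,)$, contradicting minimality. Writing $Sx_i=\gamma(t_i')$, one reads off that $t_i'=\sum_j a_{i-2j}t_j$, i.e.~the parameter values reproduce exactly the output of the linear scheme applied to $(t_i)$. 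Consequently, both convergence and $C^r$ smoothness of the nonlinear scheme on $X$, specialized to geodesic data, transfer verbatim to the linear scheme on $\R$; from there Theorem \ref{Th:main} (and the linear side of \cite{Grohs}) close the equivalence.

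The main technical obstacle, as far as I can see, lies in reconciling the hypotheses of Grohs's $C^r$ transfer with the barycentric framework of the present article: Grohs's theorems are customarily phrased with proximity estimates between the manifold-valued scheme and its linear shadow in an exponential chart, and one needs to confirm that the Riemannian weighted barycenter satisfies such an estimate relative to the Euclidean weighted barycenter with the same mask. Assuming this has been verified (which the introduction strongly suggests is the content of \cite{Grohs}), everything else in the proof is a matter of assembling Theorem \ref{Th:main} with the geodesic restriction argument described above.
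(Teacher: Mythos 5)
Your proposal is correct and follows the same route as the paper, which states this theorem as an immediate consequence of Theorem \ref{Th:main} combined with the smoothness-equivalence machinery of \cite{Grohs}, and offers no further written argument. The one place where you go beyond what the paper records is the reverse direction: Theorem \ref{Th:main} as literally stated gives ``linear convergence $\Rightarrow$ convergence on \emph{every} Hadamard space,'' so for a \emph{fixed} manifold $X$ one still needs to see that convergence (and $C^r$ regularity) on $X$ pulls back to the real line. Your geodesic-restriction argument supplies exactly this, and it is sound: the image of a complete geodesic is closed and convex, the projection $\pi_\gamma$ of Proposition \ref{Pro:convexprojection} does not increase any $d(x_j,\,\cdot\,)$ for $x_j$ on the geodesic, so by uniqueness of the minimizer of the uniformly convex functional in \eqref{eqn:sub} the barycenter lies on $\gamma$, and an isometric parametrization identifies the restricted scheme with the linear one. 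The only caveats are minor: you should assume $\dim X\geq1$ so that a complete geodesic exists, and the burden of the $C^r$ half of the equivalence (the proximity/smoothness transfer for Riemannian barycenters) rests entirely on \cite{Grohs}, exactly as the paper intends.
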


\section{Stochastic preliminaries}

This section is devoted to a stochastic interpretation of the subdivision rule \eqref{eqn:sub}. More precisely, we view barycentric subdivision as the semigroup
acting on $\ell^\infty(\Z^s,X)$ associated to the so-called characteristic Markov chain of $\mask$. This result requires some prerequisites about the notion of conditional expectation of random variables with values in \emph{Hadamard spaces}. 
This type of metric space was first investigated by A.D. Alexandrov in his seminal articles \cite{Alex} and \cite{Alex1}. In loc. cit., Alexandrov uses the \emph{Gauss-Bonnet} theorem, i.e. the fact that the deficiency of the angle sum in a geodesic triangle can be expressed by means of the ambient space's curvature, to generalize the notion of curvature bounds to a purely metric setting. A comprehensive introduction to the nowadays well-established theory of such spaces is \cite{Ballmann}. Furthermore we refer to the survey article \cite{Sturm} on probability measures and their centers of mass on Hadamard spaces. A treatise of the smooth case can be found in \cite{Lang}, which investigates infinite-dimensional Riemannian manifolds of nonpositive curvature.

\begin{Def}[Hadamard spaces] A complete metric space is called \emph{Hadamard-} or {global NPC}-space if and only if for $x_0,x_1$ one finds $y\in X$ such that the so-called \emph{Hadamard inequality} holds true for all $z\in X$:
\begin{equation}\label{eqn:Hadamard}d(z,y)^2\le \frac 1 2d(z,x_0)^2+\frac 1 2d(z,x_1)^2-\frac 1 4d(x_0,x_1)^2.\end{equation}
\end{Def}

In other words, a Hadamard space is a complete metric space with nonpositive curvature in the sense of A. D. Alexandrov. The Hadamard inequality \eqref{eqn:Hadamard} expresses the fact that geodesic triangles are 'slim' compared to Euclidean ones with the same edge lengths.

\pagebreak

Recall that a continuous curve $x_t\in X$, $0\le t\le1$, is called \emph{geodesic} if and only if $d(x_s,x_t)=d(x_0,x_1)|t-s|$ for all $0\le s,t\le 1$. A well-known property of Hadamard spaces is that they are \emph{strongly geodesic}: given any two points $x_0,x_1\in X$, one finds a unique geodesic joining them.
In particular, there is meaningful notion of convexity. More precisely, a subset $K\subseteq X$ of a global NPC space is called \emph{convex} if and only if for each two $x_0,x_1\in K$, the joining geodesic $x_t$ remains in $K$. Nonpositive curvature turns out to have a major impact on the convexity properties of the metric $d(\,\cdot\,,\,\cdot\,)$. 
As announced above, we are particularly interested in the concept of conditional expectation for random variables with values in Hadamard spaces.  Although several approaches appear in the literature, a definition due to K.-T. Sturm, see \cite{Sturm2}, serves our purposes best. 
Sturm's definition relies on a convex projection property enjoyed by NPC spaces:

\begin{Pro}\label{Pro:convexprojection} Suppose $(X,d)$ is a global NPC space, and $C\subseteq X$ a closed and convex subset. Then there exists a well-defined \emph{projection map} $\pi_C:X\to C$ defined by the relation
$$d(\pi_C(x),x)=\min_{y\in C}d(y,x).$$
This projection is Lipschitz-continuous in a sense that $d(\pi_C(x),\pi_C(y))\le d(x,y)$ for $x,y\in X$.
\end{Pro}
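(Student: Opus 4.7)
The plan is to split the statement into three parts: existence of a minimizer, its uniqueness, and the $1$-Lipschitz bound. For existence, I would start from a minimizing sequence $(y_n)\subset C$ with $d(y_n,x)\to D:=\inf_{y\in C}d(y,x)$. Convexity of $C$ ensures the midpoint $m_{n,m}$ of $y_n$ and $y_m$ lies in $C$, so $d(m_{n,m},x)\ge D$. Testing the Hadamard inequality \eqref{eqn:Hadamard} at $z=x$ with $x_0=y_n$, $x_1=y_m$ then gives
$$\tfrac14 d(y_n,y_m)^2\le \tfrac12 d(y_n,x)^2+\tfrac12 d(y_m,x)^2-D^2\longrightarrow 0,$$
so $(y_n)$ is Cauchy; completeness produces a limit, which lies in $C$ by closedness and realises $D$. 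Uniqueness is obtained verbatim by applying the same bound to two alleged minimizers.

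For the Lipschitz estimate, set $p=\pi_C(x)$, $q=\pi_C(y)$ and consider the geodesic $\gamma:[0,1]\to C$ from $p$ to $q$. The key is a Pythagorean inequality extracted from the behaviour of $t\mapsto d(x,\gamma(t))^2$. The Hadamard inequality, upgraded from its midpoint form \eqref{eqn:Hadamard} to the standard geodesic-convexity statement, yields
$$d(x,\gamma(t))^2\le (1-t)d(x,p)^2+t\,d(x,q)^2-t(1-t)d(p,q)^2.$$
Since the left-hand side is bounded below by $d(x,p)^2$ by minimality, subtracting $d(x,p)^2$, dividing by $t$, and letting $t\to 0^+$ produces $d(x,p)^2+d(p,q)^2\le d(x,q)^2$, and symmetrically $d(y,q)^2+d(p,q)^2\le d(y,p)^2$.

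The final step invokes Reshetnyak's quadruple comparison
$$d(x,q)^2+d(p,y)^2\le d(x,p)^2+d(p,q)^2+d(q,y)^2+d(y,x)^2,$$
itself obtained by applying \eqref{eqn:Hadamard} at the midpoints $m_1$ of $\{x,q\}$ and $m_2$ of $\{p,y\}$ and using $d(m_1,m_2)^2\ge 0$. Adding the two Pythagorean estimates bounds $d(x,p)^2+d(y,q)^2+2d(p,q)^2$ from above by $d(x,q)^2+d(y,p)^2$, and chaining with Reshetnyak makes the terms $d(x,p)^2+d(y,q)^2+d(p,q)^2$ cancel, leaving $d(p,q)^2\le d(x,y)^2$. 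The main obstacle is the Pythagorean inequality: it relies on upgrading the pointwise midpoint statement \eqref{eqn:Hadamard} to convexity of the squared distance along an arbitrary geodesic — a classical but nontrivial step in the theory of NPC spaces. Existence of minimizers and Reshetnyak's inequality are then essentially bookkeeping applications of \eqref{eqn:Hadamard}.
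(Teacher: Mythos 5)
The paper does not prove this proposition at all: it is quoted as a classical fact about NPC spaces (going back to Sturm's \cite{Sturm2} and the standard literature), so there is no in-paper argument to compare against. Your proposal supplies a correct, self-contained proof along the standard lines. The existence/uniqueness part is exactly right: the midpoint of a minimizing pair lies in $C$ by convexity (note that plugging $z=x_0$ and $z=x_1$ into \eqref{eqn:Hadamard} shows the point $y$ there is forced to be the geodesic midpoint, so convexity of $C$ really does apply), and the resulting bound $\tfrac14 d(y_n,y_m)^2\le\tfrac12 d(x,y_n)^2+\tfrac12 d(x,y_m)^2-D^2$ makes the minimizing sequence Cauchy. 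For the Lipschitz bound, your chain of the two Pythagorean (obtuse-angle) inequalities with the quadrilateral inequality is arithmetically correct and does yield $d(p,q)^2\le d(x,y)^2$; I checked that the quadrilateral inequality indeed follows from \eqref{eqn:Hadamard} applied at the two midpoints exactly as you indicate. The one genuine step you lean on is the upgrade from the midpoint inequality \eqref{eqn:Hadamard} to convexity of $t\mapsto d(x,\gamma(t))^2-t(1-t)d(p,q)^2$ along a whole geodesic, which you correctly flag; it follows by iterating midpoints to dyadic parameters and using continuity, and it is genuinely needed --- the single-midpoint version only gives $d(x,p)^2+\tfrac12 d(p,q)^2\le d(x,q)^2$, and with the factor $\tfrac12$ the final cancellation against the quadrilateral inequality degenerates to the vacuous $0\le d(x,y)^2$. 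So your proof is sound, and arguably more informative than the paper, which simply defers to the literature.
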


Given a probability space $(\Omega,\F,\prob)$ and a metric space $(X,d)$, a strongly measurable function is called \emph{square-integrable} if and only if $$\int_\Omega d^2(f(\omega),x)\prob(d\omega)<\infty$$ for one (and then all) $x\in X$. The space $L^2(\F,X)$ of square-integrable functions $f:\Omega\to X$ factorized by the relation of being equal almost surely is endowed with a metric
$$d_2(f,g)=\left(\int_\Omega d^2(f(\omega),g(\omega))\prob(d\omega)\right)^{\frac 1 2}.$$
In case $(X,d)$ is a global NPC space, it is well-known that $(L^2(\F,X),d_2)$ inherits the Hadamard property. Moreover, given a subalgebra $\G\subseteq\F$, it is easy to show that $L^2(\G,X)$, the subspace of $L^2(\F,X)$ corresponding to $\G$-measurable square-integrable maps, is closed and convex. In view of Proposition \ref{Pro:convexprojection} one thus obtains the following Definition:

\begin{Def}[\cite{Sturm2}]\label{Def:cond}
Suppose $(\Omega,\F,\prob)$ is a probability space, and let $Y\in L^2(\F,X)$ be a square-integrable random variable with separable image in the Hadamard space $X$. Given a subalgebra $\G\subseteq\F$, there is a $\G$-measurable $Z:\Omega\to X$ minimizing the functional
$$Z'\mapsto E(d^2(Y,Z'))$$
among all $\G$-measurable square-integrable random variables. Any other minimizer coincides with $Z$ almost surely.
One refers to $Z$ as the \emph{conditional expectation of $Y$ given $\G$} and uses the notation $Z=E(Y|\G)$.
\end{Def}

Otherwise put, following the notation of Proposition \ref{Pro:convexprojection}, $E(Y|\G)=\pi_{L^2(\G,X)}(Y)$. This definition follows the principle that in the real-valued case, the conditional expectation $E(\,\cdot\,|\G):L^1(\F,\R)\to L^1(\G,\R)$ as introduced by Kolmogorov
restricts to $L^2$ as the orthogonal projection to the space of $\G$-measurable $L^2$-functions.\

\pagebreak

\begin{Rem}
The space $L^p(\F,X)$ is, as in the case $p=2$, defined to be the space of strongly measurable functions $f:(\Omega,\F,\prob)\to X$ such that 
$\int_\Omega d^p(f(\omega),x)\prob(d\omega)$ modulo equality almost surely. Again $L^p(\F,X)$ comes with a metric of the form
$$d_p(f,g)=\left(\int_\Omega d^p(f(\omega),g(\omega))\prob(d\omega)\right)^{\frac 1 p}.$$
The conditional expectation as defined above is continuous in a sense that for each two $Y,Z\in L^2(\F,X)$ and $p\in[1,\infty]$,
$$d_p(E(Y|\G),E(Z|\G))\le d_p(Y,Z).$$
In particular, $E(\,\cdot\,|\G)$ extends continuously to $L^1(\F,X)$.
\end{Rem}

\begin{Rem}
It is inherent in the existence statement provided by Definition \ref{Def:cond} that each integrable random variable with values in $X$ possesses an \emph{expected value} defined by
$$E(Y):=E(Y\mid\{\emptyset,\Omega\}),$$
which is the unique minimizer of the functional
$$z\mapsto\int_\Omega d^2(Y(\omega),z)\prob(d\omega).$$
\end{Rem}

Recall the elementary \emph{smoothing} property of conditional expectations for random variables with values in a linear space: Given an ordered pair of subalgebras $\G\subseteq\widetilde\G\subseteq\F$, one has
$$E(E(\,\cdot\,|\widetilde\G)|\G)=E(\,\cdot\,|\G)$$
almost surely. Not surprisingly, this associativity property is violated in the nonlinear case, see Example 3.2 in \cite{Sturm2}. Actually a generalization of this feature would render the theory of nonlinear subdivision schemes of type \eqref{eqn:sub} obsolete, as the discussion following Theorem \ref{Th:lin} below illustrates.

It is for the reason of lacking associativity that K.-T. Sturm in his treatise \cite{Sturm2} defines:

\begin{Def}
Suppose $\F_0\subseteq\F_1\subseteq\cdots\F_N=\F$ is a sequence of subalgebras. Furthermore assume $Y\in L^2(\F,X)$. Then one defines the \emph{filtered conditional expectation} of $Y$ given $(\F_n)_{0\le n\le N}$ as
\begin{equation}\label{eqn:smoothing}E(Y|||\F_0)=E(\cdots E(E(Y|\F_{N-1})|\F_{N-2})\cdots|\F_0).\end{equation}
\end{Def}

Let us briefly recall the basic construction of a Markov chain, beginning with a foundational definition. Recall that  a family of maps
$$p_{n,m}:\Z^s\times\Z^s\to\R$$
parametrized by nonnegative integers $n\le m$, is called \emph{Markov transition kernel} if the following are fulfilled:
\begin{enumerate}
\item[(i)] $p_{n,m}(i,\,\cdot\,)$ is a probability measure for all $i\in\Z^s$
\item[(ii)] $p_{n,n}(i,\,\cdot\,)=\delta_{\{i\}}$.
\item[(iii)] The \emph{Chapman-Kolmogorov} equations are fulfilled: for each $n\le\ell\le m$ one has
\begin{equation}\label{eqn:Chapman}p_{n,m}(i,j)=\sum_{k\in\Z^s}p_{n,\ell}(i,k)p_{\ell,m}(k,j).\end{equation}
\end{enumerate}
We use the notation $P=(p_{n,m})_{n\le m}$.

Let $\Omega=(\Z^s)^{\N_0}$ denote the space of sequences on the grid $\Z^s$, endowed with the infinite power $\F=\bigotimes_{n\in\N}\mathfrak{P}(\Z^s)$ of the discrete sigma algebra on $\Z^s$. Consider the filtration
$$\F_n=\sigma(\{i_0\}\times\dots\times\{i_n\}\times\Z^s\times\cdots\mid i_j\in\Z^s\text{ for }j=1,\dots,n).$$
Choose an initial distribution $\alpha$ on $\Z^s$, and introduce a probability measure on $\F$ via 
$$\prob_\alpha(\{i_0\}\times\dots\times\{i_n\}\times\Z^s\times\cdots)=\alpha_{i_0}p_{0,1}(i_0,i_1)p_{1,2}(i_1,i_2)\dots p_{n-1,n}(i_{n-1},i_n),$$
and the standard extension theorems. Whenever $\alpha$ equals $\delta_{\{i\}}$, the point measure on $\{i\}$, we will write $\prob_\alpha=\prob_i$.
Expected values of integrable random variables $Y:\Omega\to X$ with respect to $\prob_\alpha$ are, as usual, written as 
$$E_\alpha(Y)=\argmin\Big(\int_\Omega d^2(Y(\omega),\,\cdot\,)\prob_\alpha(d\omega)\Big).$$
Finally, the discrete stochastic process 
$$X_n:(\Omega,\F,\prob)\to\Z^s;\quad\omega\mapsto\omega_n$$
constitutes a \emph{Markov chain} associated to the transition kernel $P$, meaning that the \emph{linear Markov property} holds true: For any nonnegative $f:\Z^s\to\R$ one has
\begin{equation}\label{eqn:linmarkov}E_\alpha(f(X_m)|\F_n)=\sum_{j\in\Z^s}p_{n,m}(X_n,j)f(j).\end{equation}
In particular, this linear Markov property allows for the interpretation of $p_{n,m}(i,j)$ as the transition probability $\prob_\alpha(X_m=j|X_n=i)$.

In view of the convergence analysis of barycentric subdivision schemes, it is of particular interest to gain a deeper understanding of principle of conditioning in case the filtration stems from a Markov chain.
A \emph{nonlinear Markov property} analogous to \eqref{eqn:linmarkov}, see \cite[Theorem 5.2]{Sturm2}, leads to a representation of the conditional expectation explicit enough for our purposes. We provide a short proof adapted to our setting, beginning with an auxiliary result which can be found e.g.\ in \cite{Sturm2}:

\begin{Le}\label{Le:prepcond}
Suppose $(X_k)_{k\in\N_0}$ is a Markov chain in $\Z^s$ associated to the transition kernel $P$. Choose an initial distribution $\alpha$. Furthermore assume $Y:\Omega\to X$ is $\F_n$-measurable, and let $x\in\ell^\infty(\Z^s,X)$.
Then for nonnegative and measurable $f:X\times X\to\R$ and $m\geq n$ we have
$$\int_\Omega f(x\circ X_m(\omega),Y(\omega))\prob_\alpha(d\omega)=\int_\Omega \sum_j p_{n,m}(X_n(\omega),j)f(x_j,Y(\omega))\prob_\alpha(d\omega).$$
\end{Le}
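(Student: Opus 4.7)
The plan is to unfold the measure $\prob_\alpha$ explicitly and apply Tonelli together with the Chapman--Kolmogorov identity \eqref{eqn:Chapman}. First I would note that, since the filtration $\F_n$ is atomic with atoms $\{i_0\}\times\cdots\times\{i_n\}\times\Z^s\times\cdots$, any $\F_n$-measurable $Y$ is automatically of the form $Y(\omega)=g(\omega_0,\ldots,\omega_n)$ for some map $g\colon(\Z^s)^{n+1}\to X$. The definition of $\prob_\alpha$ then turns the left-hand integral into the iterated sum
\begin{equation*}
\sum_{i_0,\ldots,i_m\in\Z^s}\alpha_{i_0}\prod_{k=0}^{m-1}p_{k,k+1}(i_k,i_{k+1})\,f\bigl(x_{i_m},g(i_0,\ldots,i_n)\bigr),
\end{equation*}
and any rearrangement of this series is justified by Tonelli's theorem thanks to $f\ge 0$.

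Next, for each fixed $(i_0,\ldots,i_n)$ I would sum out the intermediate indices $i_{n+1},\ldots,i_{m-1}$. Since the summand depends on them only through the factors $p_{n,n+1}(i_n,i_{n+1})\,p_{n+1,n+2}(i_{n+1},i_{n+2})\cdots p_{m-1,m}(i_{m-1},i_m)$, an iterated application of \eqref{eqn:Chapman} collapses this product into $p_{n,m}(i_n,i_m)$. Reassembling the outer sum over $(i_0,\ldots,i_n)$ together with the surviving index $i_m$ back as an integral over $\Omega$ against $\prob_\alpha$ then yields precisely the right-hand side of the claimed identity.

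I do not expect any genuine obstacle here: the argument is essentially a bookkeeping computation built from the explicit product structure of $\prob_\alpha$, Tonelli, and \eqref{eqn:Chapman}. An entirely equivalent route, should one wish to avoid the factorization of $Y$ through $(X_0,\ldots,X_n)$, is to verify the identity first for product integrands $f(u,v)=\varphi(u)\psi(v)$ by applying the tower property and pulling out the $\F_n$-measurable factor $\psi(Y)$, then invoking the linear Markov property \eqref{eqn:linmarkov} with the nonnegative function $j\mapsto\varphi(x_j)$ on $\Z^s$, and finally extending to arbitrary nonnegative measurable $f$ by the standard monotone class / monotone convergence argument.
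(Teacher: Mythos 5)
Your argument is correct. Note that the paper itself offers no proof of this lemma --- it is simply quoted as an auxiliary result from \cite{Sturm2} --- so there is nothing to match your computation against; but your direct route is sound and self-contained. The key observations all hold in this setting: since $\F_n$ is generated by the countably many atoms $\{i_0\}\times\cdots\times\{i_n\}\times\Z^s\times\cdots$, an $\F_n$-measurable $Y$ is indeed constant on each atom, hence of the form $g(\omega_0,\dots,\omega_n)$ and takes only countably many values; consequently $\omega\mapsto f(x\circ X_m(\omega),Y(\omega))$ takes countably many values on measurable sets and no measurability issue arises for either side. Unfolding $\prob_\alpha$ on the cylinders determined by coordinates $0,\dots,m$, invoking Tonelli for the nonnegative summand, and collapsing the sum over $i_{n+1},\dots,i_{m-1}$ via an iterated application of the Chapman--Kolmogorov relation \eqref{eqn:Chapman} to obtain $p_{n,m}(i_n,i_m)$ gives exactly the right-hand side. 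Your alternative route through product integrands $\varphi(u)\psi(v)$, the tower property, the linear Markov property \eqref{eqn:linmarkov} applied to $j\mapsto\varphi(x_j)$, and a monotone class extension is also viable; its only (minor) delicacy is that the extension step produces the identity for $f$ measurable with respect to the \emph{product} $\sigma$-algebra on $X\times X$, which suffices here since the lemma is only ever applied with $f=d^2$, a continuous function on a space where $x$ has separable range. The direct computation avoids even that caveat, so it is the preferable write-up.
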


\begin{Pro}[Nonlinear Markov property]\label{Le:cond}Let $(X_k)_{k\in\N_0}$ be a Markov chain as in Lemma \ref{Le:prepcond}, and suppose $x\in\ell^\infty(\Z^s,X)$, with $(X,d)$ Hadamard. Choose $n,m\in\N_0$ with $n<m$. Then
\begin{equation}\label{eqn:nonlinmarkov}
\begin{split}E_\alpha(x(X_m)|\F_n)(\omega)&=\argmin\sum_{j\in\Z^s}p_{n,m}(X_n(\omega),j)d^2(x_j,\,\cdot\,)\\
&=E_{X_n(\omega)}(x(X_m)).
\end{split}
\end{equation}
\end{Pro}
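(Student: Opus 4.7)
The strategy is to construct an explicit candidate $Z$ for the conditional expectation, check that it is $\F_n$-measurable and square integrable, and then apply Lemma~\ref{Le:prepcond} to verify the defining minimization property. I would set
$$Z(\omega) := \argmin_{z\in X} \sum_{j\in\Z^s} p_{n,m}(X_n(\omega),j)\, d^2(x_j, z).$$
Well-definedness of the argmin is the analytic heart of the claim: given boundedness of $x$ and the fact that $(p_{n,m}(X_n(\omega),j))_{j\in\Z^s}$ is a probability measure on $\Z^s$, the Hadamard inequality \eqref{eqn:Hadamard} promotes each $z\mapsto d^2(x_j,z)$ to a strictly (in fact $2$-)convex function along geodesics, a property preserved under nonnegative convex combinations. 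The resulting functional is coercive and strictly convex, so it attains a unique minimizer --- the barycenter of the finitely supported measure $\sum_j p_{n,m}(X_n(\omega),j)\delta_{x_j}$ on $X$. Since $X_n$ takes countably many values in $\Z^s$ and $Z$ depends on $\omega$ only through $X_n$, the map $Z$ is strongly $\F_n$-measurable and lies in $L^2(\F_n,X)$.

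With $Z$ at hand, I would pick any $\F_n$-measurable $Z'\in L^2(\F_n,X)$ and apply Lemma~\ref{Le:prepcond} with $f(u,v):=d^2(u,v)$ and $Y:=Z'$ to obtain
$$\int_\Omega d^2(x\circ X_m(\omega),Z'(\omega))\,\prob_\alpha(d\omega) = \int_\Omega \sum_{j\in\Z^s} p_{n,m}(X_n(\omega),j)\, d^2(x_j, Z'(\omega))\,\prob_\alpha(d\omega).$$
By the very definition of $Z$, the integrand on the right is pointwise minimized in $Z'(\omega)$ precisely at $Z'(\omega)=Z(\omega)$, with strict inequality whenever $Z'(\omega)\neq Z(\omega)$. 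Consequently the functional $Z'\mapsto E_\alpha d^2(x\circ X_m,Z')$ is minimized among $\F_n$-measurable square-integrable maps exactly when $Z'=Z$ $\prob_\alpha$-almost surely, which by Definition~\ref{Def:cond} identifies $Z$ with $E_\alpha(x\circ X_m\mid\F_n)$.

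The second equality in \eqref{eqn:nonlinmarkov} is a re-reading of $Z(\omega)$ via the definition of the expected value in a Hadamard space: substituting $i=X_n(\omega)$ into $E_i(x\circ X_m)=\argmin_z\sum_j p_{n,m}(i,j)d^2(x_j,z)$ --- which follows directly from how $\prob_i$ distributes $X_m$ through the transition kernel --- recovers $Z(\omega)$ on the nose. The principal obstacle in the whole argument is therefore the existence and uniqueness of a barycenter of a finitely supported probability measure on a Hadamard space; once this standard consequence of \eqref{eqn:Hadamard} is granted, the remainder of the proof reduces to a transparent application of Lemma~\ref{Le:prepcond} followed by pointwise minimization under the integral sign.
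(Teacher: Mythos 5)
Your proposal is correct and follows essentially the same route as the paper: define the candidate pointwise as the barycenter $\argmin_z\sum_j p_{n,m}(X_n(\omega),j)d^2(x_j,z)$, note it is $\F_n$-measurable as a function of $X_n$, and use Lemma~\ref{Le:prepcond} with $f=d^2$ to reduce the $L^2$-minimization over $\F_n$-measurable competitors to a pointwise minimization under the integral. The only difference is cosmetic: you spell out existence and uniqueness of the barycenter from the Hadamard inequality, which the paper treats as standard, while the paper additionally motivates the candidate via the linear Markov property \eqref{eqn:linmarkov}.
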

\begin{proof}
By the linear Markov property \eqref{eqn:linmarkov}, 
\begin{align*}
Y(\omega)&:=\argmin\left(E_\alpha(d^2(x\circ X_m,\,\cdot\,)\mid\F_n)(\omega)\right)\\
&=\argmin\sum_{j\in\Z^s}p_{n,m}(X_n(\omega),j)d^2(x_j,\,\cdot\,).
\end{align*}
Clearly $Y$, as a measurable function of $X_n$, is $\F_n$-measurable. Thus, in order to verify that $Y$ is indeed the conditional expectation of $x(X_m)$ given $\F_n$, it remains to show that for each $\F_n$-measurable function $Z$ the inequality $E_\alpha(d^2(X_m,Y))\le E_\alpha(d^2(X_m,Z))$ holds true, cf.\ Definition \ref{Def:cond}. For this sake, define
$$\begin{cases}\psi:\Z^s\times X\to\R_{\geq0}\cup\{\infty\};\\
(i,z)\mapsto\sum_j p_{n,m}(i,j)d(x_j,z).\end{cases}$$
By construction of $Y$ we have $\psi(X_m,Y)\le\psi(X_m,Z)$.
Thus, Lemma \ref{Le:prepcond} implies 
\begin{align*}E_\alpha(d^2(x\circ X_m,Y))&=E_\alpha(\psi(X_m,Y))\\
&\le E_\alpha(\psi(X_m,Z))\\
&=E_\alpha(d^2(x\circ X_m,Z)).
\end{align*}
\end{proof}
\begin{Rem}Proposition \ref{Le:cond} implies that the expression $E_\alpha(x(X_m)|\F_n)$ actually is independent of the initial distribution $\alpha$.
Therefore we will omit $\alpha$ in the following and simply write $E(x(X_m)|\F_n)$.
\end{Rem}

We are now in a position to establish a link between nonlinear Markov semigroups and barycentric refinement processes.
Suppose $\fa=\mask$ is a nonnegative compactly supported $s$-variate sequence such that $\sum_ja_{i-2j}=1$ for all $i\in\Z^s$. Define recursively
$a^{0}_i=\delta_{\{0\}}(i)$ and
$$a^{(n+1)}_i=\sum_{j\in\Z^s}a_{i-2j}a^{(n)}_j.$$
Then clearly 
$$p_{n,m}^\fa(i,j)=a^{(m-n)}_{i-2^{m-n}j}$$
defines a Markov transition kernel. This kernel is \emph{homogeneous} in the sense that $p_{n,n+1}^\fa(i,j)=a_{i-2j}$ for any $n\in\N_0$. We write $P^\fa=(p_{n,m}^\fa)_{n\le m}$, denote the associated Markov chain by $X^\fa_n$, and refer to $X^\fa_n$ as the \emph{characteristic Markov chain} for $\mask$. 
The central observation of this article is the following consequence of the nonlinear Markov property \eqref{eqn:nonlinmarkov}:

\begin{Th}\label{Pro:main}
Suppose $x:\Z^s\to X$ is bounded, where $(X,d)$ is a Hadamard space. Let $S$ be a barycentric refinement scheme acting on data from $X$ according to the subdivision rule \eqref{eqn:sub}. Let $X_n^\fa$ denote the characteristic Markov chain of $\mask$. Then
$$S^nx\circ X_0^\fa=E(x\circ X_n^\fa|||\F_0).$$
\end{Th}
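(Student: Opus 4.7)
The plan is to prove this by induction on $n$, unfolding the filtered conditional expectation one layer at a time. The central observation is that a single conditioning step in the filtration of the characteristic Markov chain is precisely one application of the subdivision operator $S$.

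First I would look at the innermost layer of \eqref{eqn:smoothing}. Applied to the random variable $Y=x\circ X_n^{\fa}$ with the conditioning algebra $\F_{n-1}$, the nonlinear Markov property from Proposition \ref{Le:cond} gives, at almost every $\omega$,
$$E(x\circ X_n^{\fa}\,|\,\F_{n-1})(\omega)=\argmin\sum_{j\in\Z^s}p^{\fa}_{n-1,n}(X_{n-1}^{\fa}(\omega),j)\,d^2(x_j,\,\cdot\,).$$
Now I would invoke the homogeneity $p^{\fa}_{n-1,n}(i,j)=a_{i-2j}$ of the characteristic kernel. Comparing the resulting expression with the subdivision rule \eqref{eqn:sub}, I read off that
$$E(x\circ X_n^{\fa}\,|\,\F_{n-1})(\omega)=(Sx)_{X_{n-1}^{\fa}(\omega)}=(Sx)\circ X_{n-1}^{\fa}(\omega).$$
Thus one inner conditional expectation strips off one iterate of $S$ and shifts the index of the chain down by one.

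The inductive step is then a verbatim repetition. Assuming $E(\cdots E(E(x\circ X_n^{\fa}\,|\,\F_{n-1})|\F_{n-2})\cdots|\F_k)=(S^{n-k}x)\circ X_k^{\fa}$ for some $0<k\le n$, I would apply Proposition \ref{Le:cond} once more, this time to the $\F_k$-measurable random variable $(S^{n-k}x)\circ X_k^{\fa}$ with the conditioning algebra $\F_{k-1}$. Using homogeneity of $P^{\fa}$ again and the subdivision rule, exactly the same computation yields $(S^{n-k+1}x)\circ X_{k-1}^{\fa}$. After $n$ such steps we arrive at $S^n x\circ X_0^{\fa}$, which is the claimed identity.

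The only mildly subtle point is checking that Proposition \ref{Le:cond} is legitimately applicable at each stage: after the first conditioning the random variable we condition on is no longer of the form $x\circ X_m^{\fa}$ but $(S^{k}x)\circ X_{n-k}^{\fa}$, i.e.\ a bounded function of a single coordinate of the chain. Because $S^kx$ again lies in $\ell^\infty(\Z^s,X)$ (the subdivision operator maps bounded sequences to bounded sequences by \eqref{eqn:Lip}, or directly by boundedness of barycenters of bounded sets), the hypotheses of Proposition \ref{Le:cond} are met at every step, and the induction goes through without further work.
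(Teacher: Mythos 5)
Your proposal is correct and follows essentially the same route as the paper: an induction that peels off one layer of the filtered conditional expectation at a time via the nonlinear Markov property of Proposition \ref{Le:cond} applied to $S^{n-k}x\circ X^\fa_k$, using homogeneity of the characteristic kernel to identify each step with one application of $S$. Your extra remark that $S^{n-k}x$ remains in $\ell^\infty(\Z^s,X)$, so the proposition stays applicable at every stage, is a point the paper leaves implicit but is exactly the right thing to check.
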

\begin{proof}
This statement is proven by induction over $n$ using the following computation and Proposition \ref{Le:cond}:

\begin{align*}
E(S^{n-k}x\circ X^\fa_k\mid \F_{k-1})&=\argmin\Big(\sum_{j\in\Z^s}p_{k-1,k}(X^\fa_{k-1},j)d^2(S^{n-k}x_j,\cdot)\Big)\\
&=\argmin\Big(\sum_{j\in\Z^s}a_{X^\fa_{k-1}-2j}d^2(S^{n-k}x_j,\cdot)\Big)\\
&=S^{n-k+1}x\circ X^\fa_{k-1}.
\end{align*}

\end{proof}

The remainder of this article is devoted to analyzing the effects of this representation of the iterates of $S$ on the convergence properties of barycentric schemes with nonnegative masks.

\section{The convergence problem}\label{sec:convergence}

We begin this section by summarizing some well-known facts about the convergence of barycentric schemes acting on real-valued input data. Classical resources on this topic are \cite{Prautzsch,CDM,Reif}. 

\begin{Th}\label{Th:lin}
Suppose $\fa=\mask$ is an $s$-variate compactly supported sequence of nonnegative reals. Define
a refinement scheme $\widetilde S:\ell^\infty(\Z^s,\R)\to\ell^\infty(\Z^s,\R)$ via
$$\widetilde Sx_i=\sum_{j\in\Z^s}a_{i-2j}x_j,\quad\text{where }x\in\ell^\infty(\Z^s).$$
Then a necessary condition for the convergence of $\widetilde S$ on $\R$ is the basic sum rule \eqref{eqn:sumrule}.
In case the mask $\mask$ obeys this rule, we conclude
\begin{align*}\widetilde Sx_i&=\argmin\left(\sum_{j\in\Z^s}a_{i-2j}|x_j-\,\cdot\,|^2\right)=\argmin\left(\sum_{j\in\Z^s}a_{i-2j}d_{|\cdot|}(x_j,\,\cdot\,)^2\right).\end{align*}

Moreover, $\widetilde S$ converges if and only if there exists a continuous $\varphi:\R\to\R$ subject to the functional equations

\begin{eqnarray}
\label{eqn:refine}\varphi(t)&=&\sum_ja_j\varphi(2t-j)\\
\label{eqn:pou}\sum_j\varphi(t-j)&=&1.
\end{eqnarray}

Due to Equation \eqref{eqn:refine}, $\varphi$ is referred to as an $\fa$-refinable function.
Given bounded, real-valued input data $(x_i)_{i\in\Z^s}$, the limit function may be written as
$$\widetilde S^\infty x(t)=\sum_{j\in\Z^s}\varphi(t-j)x_j.$$
In particular, $\varphi=\widetilde S^\infty\delta_{\{0\}}$, where $\delta_{\{0\}}$ denotes the Dirac distribution on the origin.
\end{Th}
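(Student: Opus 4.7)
My plan is to reduce the theorem to the classical subdivision-theoretic facts surrounding refinable functions, addressing each of its four assertions in turn.

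For the \emph{necessity of the basic sum rule} I would apply $\widetilde S$ to the constant-$1$ sequence $\cf$. A classical argument, using that $\widetilde S\cf_i = \sum_j a_{i-2j}$ is $2\Z^s$-periodic in $i$ and that the iterates $\widetilde S^n\cf$ must converge uniformly on the refined grids to a continuous limit, forces each of these periodic values to equal $1$, yielding $\sum_j a_{i-2j} = 1$ for every $i$. The \emph{identification} $\widetilde S x_i = \argmin_t\sum_j a_{i-2j}(x_j - t)^2$ then follows at once: with nonnegative weights summing to one, the weighted sum-of-squares is a positive quadratic in $t$ whose minimum is the weighted mean $\sum_j a_{i-2j}x_j = \widetilde S x_i$, while $|x_j - t|^2$ is of course equal to $d_{|\cdot|}(x_j,t)^2$.

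For the \emph{refinable-function characterization} I would treat each direction separately. In the "only if" direction I set $\varphi := \widetilde S^\infty\delta_{\{0\}}$; continuity and compact support of $\varphi$ follow from the convergence hypothesis together with the finite support of $\fa$. The refinement equation \eqref{eqn:refine} is then extracted from the two-scale identity
$$\widetilde S^{n+1}\delta_{\{0\}} \ =\ \widetilde S\bigl(\widetilde S^n\delta_{\{0\}}\bigr)$$
by interpolating both sides to continuous functions on $\R^s$ and passing to the uniform limit. The partition of unity \eqref{eqn:pou} is obtained via the decomposition $\cf = \sum_j\delta_{\{j\}}$ together with the scaled translation covariance $\widetilde S^\infty\delta_{\{j\}}(t) = \varphi(t-j)$ and the identity $\widetilde S^\infty\cf \equiv 1$ from the previous step. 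In the "if" direction, given $\varphi$ as in the statement, I define
$$F_x(t) \ :=\ \sum_{j\in\Z^s}\varphi(t-j)\,x_j,$$
which is continuous by local finiteness. Using \eqref{eqn:refine} and \eqref{eqn:pou} inductively, I would establish $\sup_j|F_x(j/2^n) - \widetilde S^n x_j|\to 0$ as $n\to\infty$, thereby proving both convergence of $\widetilde S$ and the representation $\widetilde S^\infty x = F_x$; specializing to $x = \delta_{\{0\}}$ then yields $\varphi = \widetilde S^\infty\delta_{\{0\}}$.

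The main obstacle will be the derivation of \eqref{eqn:refine} in the "only if" direction: the dyadic grids at scales $n$ and $n+1$ are not directly aligned, so the translation of the discrete two-scale identity into the continuous refinement equation requires care. My strategy is to interpolate $\widetilde S^n\delta_{\{0\}}$ to continuous functions $\varphi_n \to \varphi$ uniformly, express $\widetilde S$ in terms of the $\varphi_n$ via the substitution $k = i-2j$, and pass to the limit at dyadic points $t$, using uniform continuity of $\varphi$ to absorb the scale mismatch between the grids.
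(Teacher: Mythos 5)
The paper itself offers no proof of this theorem; it is quoted as a summary of classical results with a pointer to \cite{Prautzsch,CDM,Reif}, so your sketch can only be judged on its own terms. It follows the standard outline, but the step you yourself single out as the main obstacle --- extracting \eqref{eqn:refine} --- is carried out with the wrong factorization and degenerates. Writing $a^{(n)}=\widetilde S^n\delta_{\{0\}}$, the identity $\widetilde S^{n+1}\delta_{\{0\}}=\widetilde S(\widetilde S^n\delta_{\{0\}})$ with your substitution $k=i-2j$ reads $a^{(n+1)}_i=\sum_{k}a_k\,a^{(n)}_{(i-k)/2}$ (sum over $k\equiv i\bmod 2\Z^s$), and since $\frac{(i-k)/2}{2^{n}}=t-\frac{k}{2^{n+1}}\to t$ for $t=i/2^{n+1}$ held fixed, passing to the limit yields $\varphi(t)=\bigl(\sum_{k\equiv i}a_k\bigr)\varphi(t)=\varphi(t)$ by the sum rule --- a tautology, not the refinement equation; the scale mismatch cannot be ``absorbed'' because the arguments collapse onto $t$ rather than onto $2t-k$. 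What you need is the opposite factorization $\widetilde S^{n+1}\delta_{\{0\}}=\widetilde S^{n}(\widetilde S\delta_{\{0\}})=\sum_k a_k\,\widetilde S^n\delta_{\{k\}}$, which via shift covariance gives $a^{(n+1)}_i=\sum_k a_k\,a^{(n)}_{i-2^nk}$ (the recursion the paper itself uses to define $a^{(n)}$); here $(i-2^nk)/2^n=2t-k$ is independent of $n$ and the limit is exactly $\varphi(t)=\sum_k a_k\varphi(2t-k)$.

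There are two further soft spots. In the ``if'' direction, \eqref{eqn:refine} and \eqref{eqn:pou} do give $F_x(j/2^n)=\sum_k\varphi(j-k)\widetilde S^nx_k$ and hence $|F_x(j/2^n)-\widetilde S^nx_j|\le\sum_k|\varphi(j-k)|\,|\widetilde S^nx_k-\widetilde S^nx_j|$, but this only reduces convergence to the claim $\sup_{\|i-j\|_\infty\le N}|\widetilde S^nx_i-\widetilde S^nx_j|\to0$, which does not follow ``inductively'' from the two functional equations --- it is the substantive contractivity statement (for nonnegative masks it is essentially Lemma \ref{Le:contr} of the paper specialized to $X=\R$, via positivity of $\psi(s,t)=\sum_i\varphi(t-i)\varphi(s-i)$ near the diagonal, and that argument additionally needs $\sup_i|a^{(n)}_i-\varphi(i/2^n)|\to0$, which you would still have to derive from the mere existence of $\varphi$). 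Finally, your necessity argument for the sum rule is not airtight as stated: for $s=1$ and $a_0=a_1=\tfrac12$ (all other coefficients zero) one has $\widetilde S^nx\to0$ uniformly, so the scheme converges in the sense of \eqref{eqn:converge} while $\sum_ja_{-2j}=\tfrac12$; continuity of the limit of $\widetilde S^n\cf$ forces the parity sums to equal $1$ only once one knows that some limit function is nonzero, a nondegeneracy point the classical statement (and the theorem as quoted) leaves implicit.
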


Assuming that conditional expectations of bounded random variables mapping to the metric space $X$ are well-defined in the sense of Definition \ref{Def:cond}, and in addition satisfy the smoothing property \eqref{eqn:smoothing}, we could deduce from Theorem \ref{Pro:main}
\begin{align*}S^nx\circ X_0&=E(x\circ X^\fa_n|||\F_0)\\
&=E(x\circ X^\fa_n|\F_0)\\
&=\argmin(\sum_jp^{(n)}(X_0,j)d^2(x_j,\,\cdot\,)).\end{align*}

Note, however, that $p^{\mathbf{a}}_n(i,j)$, the $n$-step transition probabilities of $(X^\fa_k)_{k\in\N_0}$, can be viewed as $(\widetilde S^n\delta_0)_{i-2^nj}$, where $\widetilde S$ denotes the linear counterpart to $S$, and $\delta_0$ the Dirac delta on the origin, cf.\ Theorem \ref{Th:lin}. Thus, the assumption of associativity would guarantee that every scheme converging for linear input data would converge on $X$ as well. Indeed, the limit functions for given input data $x\in\ell^\infty(\Z^s,X)$ would satisfy
$$S^\infty x (t)=\argmin(\sum_j\varphi(t-j)d^2(x_j,\,\cdot\,)),$$
where $\varphi=\widetilde S^\infty\delta_{\{0\}}$, leading to a complete analogy to the linear case.
However, nonlinear conditioning is a non-associative operation. The above observations demonstrate that this lack of property \eqref{eqn:smoothing} constitutes the need for a further discussion of convergence.\

The first result of this section is a small, but useful generalization of Theorem 1 in \cite{Ebner}. Although the proof transcribes more or less directly, we give a detailed exposition for the reader's convenience.

\begin{Th}\label{Th:converge}
Let $S,T$ be refinement schemes acting on data from a Hadamard space $X$. Then $S$ converges under the following assumptions:

(i) There is a function $D:\ell^\infty(\Z^s,X)\to\R_{\geq0}$, a nonnegative real number $\gamma<1$ and a positive integer $n_0$ such that 
\begin{equation}\label{eqn:contr}D(S^nx)\le \gamma^{[n/n_0]}D(x)\end{equation}
for $x\in \ell^\infty(\Z^s,X)$ and $n\in\N$.

(ii) $T$ is convergent and satisfies
$$d_\infty(Tx,Ty)\le d_\infty(x,y)$$
for $x,y\in \ell^\infty(\Z^s,X)$.

(iii) There is $C\geq0$ such that
$$d_\infty(Sx,Tx)\le C\cdot D(x)$$
for $x\in \ell^\infty(\Z^s,X)$.
\end{Th}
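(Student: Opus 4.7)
The plan is to use $T$ as a Lipschitz proxy for $S$, exploiting that the two differ only by something controlled by $D$, which decays by (i), while iterates of $T$ remain non-expansive by (ii). The candidate limit function will be the uniform limit of the continuous maps $g_n : \R^s \to X$ defined by $g_n(t) := T^\infty(S^n x)(2^n t)$, where the scaling factor $2^n$ is chosen so that $g_n$ encodes $S^n x$ at its natural dyadic scale $2^{-n}\Z^s$.

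The central technical ingredient is the telescoping bound
\[
d_\infty(S^m x, T^{m-n}(S^n x)) \le C D(x) \sum_{k=n}^{m-1} \gamma^{[k/n_0]}, \qquad 0 \le n \le m,
\]
which I would obtain by decomposing $S^{m-n}(S^n x)$ versus $T^{m-n}(S^n x)$ into $m-n$ single-step replacements of the outermost $S$ by $T$. Each replacement contributes $d_\infty(S(S^{n+k}x), T(S^{n+k}x)) \le C D(S^{n+k}x) \le C \gamma^{[(n+k)/n_0]} D(x)$ by (iii) and (i), and this discrepancy propagates unchanged through the remaining $T$-iterates by (ii).

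Next I would prove uniform Cauchyness of $(g_n)$. For any dyadic rational $t = j/2^m$ with $m > \max(n_1, n_2)$, the identity $g_{n_i}(t) = T^\infty(S^{n_i}x)(j / 2^{m-n_i})$ together with the triangle inequality yields
\begin{align*}
d(g_{n_1}(t), g_{n_2}(t)) \le {} & \sum_{i=1}^{2} \sup_k d\bigl(T^\infty(S^{n_i}x)(k/2^{m-n_i}),\, T^{m-n_i}(S^{n_i}x)_k\bigr) \\
& {} + d_\infty(T^{m-n_1}S^{n_1}x, S^m x) + d_\infty(S^m x, T^{m-n_2}S^{n_2}x).
\end{align*}
The first sum vanishes as $m \to \infty$ by convergence of $T$ on each $S^{n_i}x$, and the remaining two terms are each bounded by the telescoping estimate. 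Passing to the limit gives $d(g_{n_1}(t), g_{n_2}(t)) \le 2 C D(x) \sum_{k \ge n_1} \gamma^{[k/n_0]}$ independently of $t$, and continuity of $g_{n_i}$ together with density of dyadic rationals extends this to all of $\R^s$. Since this upper bound tends to $0$ as $n_1 \to \infty$, the sequence is uniformly Cauchy and converges to a continuous $S^\infty x : \R^s \to X$.

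To conclude, I would verify $\sup_j d(S^\infty x(j/2^n), S^n x_j) \to 0$ by fixing a large $N$ and bounding the distance by (a) the uniform closeness $d_\infty(S^\infty x, g_N)$, (b) the convergence of $T$ applied to $S^N x$, which makes $d(g_N(j/2^n), T^{n-N}(S^N x)_j)$ uniformly small for $n$ large, and (c) the telescoping bound for $d_\infty(T^{n-N}S^N x, S^n x)$; choosing $N$ large then $n$ larger completes the argument. The main obstacle is the bookkeeping of the two competing dyadic scales: one must align the natural scale $2^{-n}\Z^s$ of $S^n x$ with the standard scale $2^{-m}\Z^s$ on which convergence of $T$ is expressed, and this is precisely what dictates both the factor $2^n$ in the definition of $g_n$ and the use of dyadic-rational test points to transfer the Cauchy estimate across levels.
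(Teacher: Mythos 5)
Your proposal is correct and follows essentially the same route as the paper's proof: the same candidate functions $T^\infty(S^nx)(2^n\,\cdot\,)$, the same telescoping replacement bound $d_\infty(S^mx,T^{m-n}S^nx)\le CD(x)\sum_{k=n}^{m-1}\gamma^{[k/n_0]}$ obtained from (i)--(iii), and the same final three-term estimate identifying the uniform limit as $S^\infty x$. The only cosmetic difference is that you establish the Cauchy property at dyadic points and extend by density, whereas the paper compares consecutive iterates at a nearby dyadic point chosen by continuity.
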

\begin{proof}

We set $f_n(y):=T^\infty(S^nx)(2^ny)$ and claim that this defines a Cauchy sequence in $\left(C(\R^s,X),d_\infty\right)$. Note first that given $n\in\N$ and  $y\in\R^s$, by continuity of $f_n$ respectively $f_{n+1}$, we find $j\in\Z^s$ and $m\in\N$ such that
\begin{equation}\label{eqn:1}d(f_r(y),f_r(2^{-m}j))<C\cdot D(x)\gamma^{[n/n_0]}\quad\text{ for }r=n,n+1.\end{equation}
Moreover, due to convergence of $T$, by multiplying both the numerator and the denominator of the number $j/2^m$ with a power of two if necessary we may assume $m$ to be sufficiently large for 
\begin{align*}d(f_r(2^{-m}j),T^{m-r}(S^rx)_j)&=d(T^\infty S^rx(2^{r-m}j),T^{m-r}(S^rx)_j)<C\cdot D(x)\gamma^{[n/n_0]}\end{align*}
to hold for $r=n,n+1$, in addition to \eqref{eqn:1}. This together with (i) and (ii) implies
\begin{align*}
d(f_n(y),f_{n+1}(y))&\le d( f_n (y), f_n (2^{-m}j))\\
&+d( f_n (2^{-m}j),T^{m-n}S^nx_j)\\
&+d(T^{m-n}S^nx_j,T^{m-n-1}S^{n+1}x_j)
\\&+d(T^{m-n-1}S^{n+1}x_j, f_{n+1} (2^{-m}j))\\
&+d( f_{n+1} (2^{-m}j), f_{n+1} (y))\\
&<5C\cdot D(x)\gamma^{[n/n_0]},
\end{align*}
showing that $f_n$ is a Cauchy sequence. Since $X$ is complete, we find a continuous $f:\R^s\to X$ with $f_n\to f$ uniformly. We claim that $S^nx$ converges to $f$ in the sense of \eqref{eqn:converge}. For $m\geq n$ and $j\in\Z^s$, we obtain the inequality

\begin{align*}
d(T^{m-n}S^nx_j,S^mx_j)&\le\sum_{k=n}^{m-1}d(T^{m-k}S^kx_j,T^{m-k-1}S^{k+1}x_j)\\&
\le\sum_{k=n}^{m-1}\gamma^{[k/n_0]}\cdot D(x)C\le\gamma^{[n/n_0]}\left(\frac{n_0D(x)C}{1-\gamma}\right),\end{align*}
which together with $$d(f_n(2^{-m}j),S^mx_j)\le d(f_n(2^{-m}j),T^{m-n}S^nx_j)+d(T^{m-n}S^nx_j,S^mx_j)$$
establishes the claim.
\end{proof}

\begin{Def}
In accordance with \cite{Ebner}, we call a scheme $S$ satisfying \eqref{eqn:contr} \emph{weakly contractive}. Thus, a weakly contractive scheme is contractive if and only if $n_0=1$. 
\end{Def}

\pagebreak

In the following we rely on a nonlinear version of Jensen's inequality due to K. T. Sturm, cf.\ \cite{Sturm2}:

\begin{Le}[Conditional Jensen's inequality]\label{Le:Jensen}
Suppose $\psi:X\to\R$ is a convex, lower semicontinuous function on a Hadamard space $(X,d)$, and $(\Omega,\F,\prob)$ is a probability space. Moreover suppose $(\F_k)_{k\in\N_0}$ is a filtration in $\F$. Then for each bounded, $\F_N$-measurable random variable $Y:\Omega\to X$ the following holds true:
\begin{equation}\label{eqn:Jensen}\psi(E(Y|||(\F_k)_{k\geq n}))\le E(\psi(Y)|\F_n).\end{equation}
\end{Le}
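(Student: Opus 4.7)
The plan is to reduce the assertion to the one-step Jensen inequality $\psi(E(Y|\G)) \le E(\psi(Y)|\G)$ for a single sub-$\sigma$-algebra, and in turn to the classical unconditional Jensen inequality for barycenters on a Hadamard space.

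First I will unwind the filtered conditional expectation. Set $Y_N := Y$ and $Y_k := E(Y_{k+1}|\F_k)$ for $k=N-1,\ldots,n$, so that by definition $E(Y|||(\F_k)_{k\ge n}) = Y_n$. I claim $\psi(Y_k) \le E(\psi(Y)|\F_k)$ for every $k\in\{n,\ldots,N\}$, which I shall prove by downward induction on $k$. The base case $k=N$ is immediate. For the inductive step, the one-step conditional Jensen inequality applied to the $\F_{k+1}$-measurable Hadamard-valued variable $Y_{k+1}$, combined with the monotonicity and tower property of the \emph{linear} conditional expectation acting on the real-valued function $\psi(Y_{k+1})$, yields
$$\psi(Y_k)=\psi(E(Y_{k+1}|\F_k))\le E(\psi(Y_{k+1})|\F_k)\le E(E(\psi(Y)|\F_{k+1})|\F_k)=E(\psi(Y)|\F_k).$$
Observe that the tower property is invoked only for real-valued random variables, so the nonassociativity of the nonlinear conditional expectation never interferes with the induction.

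It remains to establish the one-step inequality $\psi(E(Y|\G))\le E(\psi(Y)|\G)$ almost surely. Using a regular conditional probability $\prob(\,\cdot\,|\G)(\omega)$, which exists since the image of $Y$ is essentially separable in $X$, the nonlinear conditional expectation $E(Y|\G)(\omega)$ is realised a.s.\ as the barycenter of the push-forward measure $Y_*\prob(\,\cdot\,|\G)(\omega)$; compare Proposition \ref{Le:cond} for the Markovian instance of this disintegration. Thus the inequality reduces $\omega$-wise to the unconditional Jensen inequality $\psi(\bar\mu)\le\int_X \psi\,d\mu$ for the barycenter $\bar\mu$ of a probability measure $\mu$ on a Hadamard space, which is a standard consequence of the variance inequality enjoyed by the Hadamard barycenter together with the geodesic convexity and lower semicontinuity of $\psi$; see \cite{Sturm}.

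The principal obstacle is the disintegration step itself: producing a regular version of $\prob(\,\cdot\,|\G)$ compatible with the nonlinear conditional expectation requires a measurable selection argument and rests on the separability of the image of $Y$, which is implicit in the standing hypotheses throughout the paper. An alternative route avoiding regular conditional probabilities is to first verify the one-step inequality for $\G$ finite and atomically generated by a partition $\{A_1,\ldots,A_m\}$---where on each $A_j$ it reduces directly to the unconditional Jensen inequality applied to the barycenter of the normalised restriction of the distribution of $Y$ to $A_j$---and then to extend to arbitrary $\G$ by approximating it from below by an increasing sequence of finite sub-$\sigma$-algebras, passing to the limit via $L^2$-continuity of $E(\,\cdot\,|\G_k)$ in combination with the lower semicontinuity of $\psi$.
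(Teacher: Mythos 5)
The paper offers no proof of this lemma at all: it is imported directly from K.-T. Sturm's work, the text deferring to \cite{Sturm2}, so there is no in-paper argument to compare yours against. Judged on its own merits, your reconstruction is essentially correct and follows the route one finds in \cite{Sturm2}. The downward induction is the right way to unwind the filtered conditional expectation, and your remark that the tower property is invoked only for the \emph{real-valued} variable $\psi(Y_{k+1})$ --- so that the non-associativity of the nonlinear conditional expectation never enters --- is precisely what makes the reduction to the one-step inequality legitimate. Three caveats on the remaining ingredients. First, the unconditional inequality $\psi(b(\mu))\le\int_X\psi\,d\mu$ for the barycenter $b(\mu)$ is not ``a standard consequence of the variance inequality'': in \cite{Sturm} it is proved via the nonlinear strong law of large numbers (iterated two-point convexity along inductive means, then lower semicontinuity in the limit), and it is a genuine theorem rather than a one-line corollary; since you only cite it, this is a mischaracterization rather than a gap. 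Second, your alternative route through finite sub-$\sigma$-algebras is not actually more elementary: passing from $\G_k\uparrow\G$ to $\G$ requires the convergence $E(Y|\G_k)\to E(Y|\G)$, which is a nonlinear martingale convergence theorem and itself one of the substantive results of \cite{Sturm2}, so nothing is saved. Third, to run the induction cleanly you should record that each $Y_k$ takes values in the bounded closed convex hull of the essential range of $Y$ (by the contraction property of the projection in Proposition \ref{Pro:convexprojection}), which is where boundedness of $Y$ guarantees integrability of $\psi(Y_k)$. Granting the disintegration identity $E(Y|\G)(\omega)=b\bigl(Y_*\prob(\,\cdot\,|\G)(\omega)\bigr)$ from \cite{Sturm2}, of which Proposition \ref{Le:cond} is indeed the Markovian special case, your argument is complete.
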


\begin{Le}\label{Le:contr}
Suppose the linear scheme associated to $\mask$ converges, and $\supp(\fa)$ $\subseteq\Omega$, with $\Omega$ bounded, convex and balanced. Denote by $\rho:\R^s\to\R_{\geq0}$ the Minkowski functional of $\Omega$. Furthermore define $D:\ell^\infty(\Z^s,X)\to\R_{\geq0}$ via 
$$D(x)=\sup_{\rho(i-j)<2}d(x_i,x_j).$$
Then the barycentric scheme $S$ associated to $\mask$ is weakly contractive with respect to $D$.
\end{Le}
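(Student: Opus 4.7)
My plan splits into a scalar contraction step (from convergence of the linear scheme) and a transfer to Hadamard data by coupling characteristic Markov chains.

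\textbf{Scalar step.} I first prove that convergence of $\widetilde S$ forces weak contractivity on $\ell^\infty(\Z^s,\R)$ with respect to $D_{\mathrm{lin}}(f):=\sup_{\rho(i-j)<2}|f_i-f_j|$, the scalar counterpart of $D$. Convergence yields a compactly supported continuous $\varphi$ with $\widetilde S^\infty f(t)=\sum_k\varphi(t-k)f_k$, and $a^{(n)}_r=\widetilde S^n\delta_{\{0\}}(r)$ approximates $\varphi(r/2^n)$ uniformly in $r$. Writing
$$\widetilde S^n f_i-\widetilde S^\infty f(i/2^n)=\sum_k\bigl[a^{(n)}_{i-2^nk}-\varphi(i/2^n-k)\bigr](f_k-f_{k_0})$$
for a lattice point $k_0$ close to $i/2^n$ (the subtraction is legal because both masks sum to $1$) keeps the sum finite, with $|f_k-f_{k_0}|$ dominated by a fixed multiple of $D_{\mathrm{lin}}(f)$ via chaining. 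The same subtraction supplies a uniform modulus of continuity for $\widetilde S^\infty f$, and together these give $D_{\mathrm{lin}}(\widetilde S^n f)\le C(n)D_{\mathrm{lin}}(f)$ with $C(n)\to 0$, whence $n_0$ and $\gamma_1<1$ with $D_{\mathrm{lin}}(\widetilde S^{n_0}f)\le\gamma_1 D_{\mathrm{lin}}(f)$.

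\textbf{Lifting step.} For $i,j\in\Z^s$ take any Markov chain $(Y_k,Z_k)_{k\ge 0}$ on $\Z^s\times\Z^s$ starting at $(i,j)$ with both marginals equal in law to the characteristic chain $X^\fa$. Because the joint transition is required to satisfy $\prob(Y_k\in\,\cdot\,\mid Y_{k-1},Z_{k-1})=a_{Y_{k-1}-2\cdot}$ (independent of $Z_{k-1}$) and symmetrically for $Z$, Proposition \ref{Le:cond} gives $E(x\circ Y_k\mid\F_{k-1})=Sx\circ Y_{k-1}$, and iterating as in the proof of Theorem \ref{Pro:main} produces $E(x\circ Y_n|||\F_0)=S^nx_i$ and symmetrically $E(x\circ Z_n|||\F_0)=S^nx_j$. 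The $d_1$-contractivity of conditional expectations recorded in the Remark after Definition \ref{Def:cond}, iterated through the filtered conditional expectation, yields
$$d(S^nx_i,S^nx_j)\le E\,d(x\circ Y_n,x\circ Z_n)=\sum_{k,l}\pi_n(k,l)\,d(x_k,x_l),$$
where $\pi_n$ is the joint law of $(Y_n,Z_n)$. Minimizing over Markov couplings and bounding $d(x_k,x_l)\le w(k,l)D(x)$, where $w$ denotes the graph distance on $\Z^s$ for the edges $\{(k,l):\rho(k-l)<2\}$, reduces this to $D(x)$ times the $w$-Wasserstein distance of $\mu_i^n:=p^\fa_{0,n}(i,\,\cdot\,)$ and $\mu_j^n$. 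By Kantorovich--Rubinstein duality that Wasserstein equals $\sup_{D_{\mathrm{lin}}(f)\le 1}|\widetilde S^nf(i)-\widetilde S^nf(j)|$, which for $\rho(i-j)<2$ is at most $\gamma_1^{[n/n_0]}$ by the scalar step. Taking the supremum over $\rho(i-j)<2$ delivers $D(S^nx)\le\gamma_1^{[n/n_0]}D(x)$.

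\textbf{Main obstacle.} The principal difficulty is obtaining scalar contraction uniformly in $f$: the set $\{f:D_{\mathrm{lin}}(f)\le 1\}$ contains unbounded sequences (of linear growth), so no classical uniform-boundedness argument applies directly. Uniformity is salvaged only by combining the constant-preservation of $\widetilde S$ (which legitimizes the subtraction of $f_{k_0}$) with the compact support of $\varphi$ (which caps the number of participating indices) in both the discrete-to-continuum comparison and the modulus of continuity estimate for $\widetilde S^\infty f$. Once scalar contraction is secured, the lifting step is essentially a formal composition of Sturm's $d_1$-contractivity of conditional expectation with Kantorovich--Rubinstein duality.
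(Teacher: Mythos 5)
Your lifting step has the key inequality pointing the wrong way. For any faithful Markov coupling $(Y_k,Z_k)$ of two copies of the characteristic chain started at $(i,j)$, the filtered conditional expectation identity plus $d_1$-contractivity does give
$$d(S^nx_i,S^nx_j)\le E\,d(x\circ Y_n,x\circ Z_n)\le D(x)\sum_{k,\ell}\pi_n(k,\ell)\,w(k,\ell),$$
and you may infimize the right-hand side over all such couplings. But the joint laws $\pi_n$ realizable at time $n$ by faithful Markov couplings form a \emph{subset} of all couplings of $\mu_i^n$ and $\mu_j^n$, so that infimum is bounded \emph{below} by $W_w(\mu_i^n,\mu_j^n)$, not equal to it. Your Kantorovich--Rubinstein estimate $W_w(\mu_i^n,\mu_j^n)\le\gamma_1^{[n/n_0]}$ therefore bounds a quantity sitting underneath the bound you actually control, and nothing follows. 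To close the argument you would have to exhibit a concrete faithful coupling with small expected cost, and the natural candidates fail: the stepwise $W_w$-optimal coupling need not contract unless the one-step scheme already does, while an $n_0$-block-optimal coupling destroys the single-step conditional law $\prob(Y_k\in\cdot\mid\F_{k-1})=a_{Y_{k-1}-2\,\cdot}$ on which your appeal to Proposition \ref{Le:cond} rests --- precisely because nonlinear conditioning is not associative, which is the whole point of the paper.

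The repair is to take the \emph{independent} coupling $\pi_n(k,\ell)=a^{(n)}_{i-2^nk}a^{(n)}_{j-2^n\ell}$, equivalently to apply Jensen's inequality \eqref{eqn:Jensen} twice; this is exactly what the paper does. The support condition $\supp(a^{(n)})\subseteq(2^n-1)\Omega$ forces every pair $(k,\ell)$ carrying mass to satisfy $\rho(k-\ell)<2$ whenever $\rho(i-j)<2$, so $w(k,\ell)\le1$ and the expected cost is at most $\prob(Y_n\neq Z_n)$. The contraction factor then comes not from scalar contractivity of $\widetilde S$ but from a lower bound on the diagonal mass, $\sum_k a^{(n)}_{i-2^nk}a^{(n)}_{j-2^nk}\approx\sum_k\varphi(i/2^n-k)\varphi(j/2^n-k)\ge\alpha_n>0$, which follows from the partition-of-unity property \eqref{eqn:pou} and uniform continuity of the refinable function. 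With that route your scalar step and the duality detour become unnecessary; the scalar contraction you sketch is essentially the multivariate form of Proposition \ref{Pro:contrconverge}, which the paper obtains downstream of Lemma \ref{Le:contr} rather than feeding it in as an ingredient.
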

\begin{proof}
The Hadamard property implies that for each $z_0\in X$ the function $$X\to\R_{\geq0};\quad z\mapsto d(z,z_0),$$
which clearly is continuous, is convex as well.  Thus, by Jensen's inequality \eqref{eqn:Jensen} and Theorem \ref{Pro:main},

\begin{equation}\label{eqn:pre}d(S^nx\circ X_0,z_0)=d(E(x\circ X^\fa_n|||\F_0),z_0)\le E(d(x\circ X^\fa_n,z_0)|\F_0).\end{equation}
Recall that the transition kernel of $X^\fa_n$ takes the form 
$$P^\fa=\left(a^{(m-n)}_{i-2^{m-n}j}\right)_{n\le m}.$$ Thus Proposition \ref{Le:cond} implies $E(d(X^\fa_n,z_0)|\F_0)=\sum_{k\in\Z^s}a_{X^\fa_0-2^nk}^{(n)}d(x_k,z_0)$. Together with \eqref{eqn:pre} this gives

$$d(S^nx_i,z_0)\le\sum_{k\in\Z^s}a^{(n)}_{i-2^nk}d(x_k,z_0)\quad\text{for all }i\in\Z^s.$$
Substituting $S^nx_j$ for $z_0$, we deduce

\begin{align*}d(S^nx_i,S^nx_j)&\le\sum_{k\in\Z^s}a^{(n)}_{i-2^nk}d(x_k,S^nx_j)\\
&\le\sum_{k\in\Z^s,\ell\in\Z^s}a^{(n)}_{i-2^nk}a^{(n)}_{j-2^n\ell}d(x_k,x_\ell).\end{align*}

The fact that the support of $\mask$ is contained in the balanced, convex and bounded set $\Omega$ together with the recursion $a^{(n)}_i=\sum_ja_{i-2j}a^{(n-1)}_j$ (which amounts to the Chapman-Kolmogorov equations \eqref{eqn:Chapman}) implies $\supp(a^{(n)})\subseteq(2^n-1)\Omega$, see also \cite{CDM}. 

Since the linear subdivision scheme with mask $\mask$ converges, we find a refinable function $\varphi:\R^s\to\R$ satisfying \eqref{eqn:refine} and \eqref{eqn:pou}, cf. Theorem \ref{Th:lin}.
Recall that one obtains this refinable function as the limit of the linear scheme acting on the input data $y_j=\delta_{j0}$, cf.\ Theorem \ref{Th:lin} : \begin{equation}\label{eqn:linconverge}\sup_i|a^{(n)}_i-\varphi(i/2^n)|=\eps_n\underset{n\to\infty}{\longrightarrow}0.\end{equation}

\pagebreak
\noindent Accordingly, setting $A=\{(k,\ell)\in\Z^s\times\Z^s\mid\max(\rho(i-2^nk),\rho(j-2^n\ell))\le2^n-1\}$, we obtain

\begin{equation}\label{eqn:aux1}\begin{split}
d(S^nx_i,S^nx_j)&\le\sum_{(k,\ell)\in A}a^{(n)}_{i-2^nk}a^{(n)}_{j-2^n\ell}d(x_k,x_\ell)\\
&\le\sum_{(k,\ell)\in A}\varphi(i/2^n-k)\varphi(j/2^n-\ell)d(x_k,x_\ell)\\
&\quad+\eps_n\left(\sum_{(k,\ell)\in A}(a^{(n)}_{i-2^nk}+a^{(n)}_{j-2^n\ell})d(x_k,x_\ell)\right)\\
&\quad+\eps_n^2\left(\sum_{(k,\ell)\in A}d(x_k,x_\ell)\right)
\end{split}\end{equation}
Now, if $i,j,k,\ell\in\Z^s$ are such that $\rho(i-j)<2$, $\rho(i-2^nk)\le2^n-1$ and $\rho(j-2^n\ell)\le2^n-1$, one concludes
\begin{equation}\label{eqn:aux2}\begin{split}\rho(k-\ell)&\le\frac{1}{2^n}(\rho(i-2^nk)+\rho(i-j)+\rho(j-2^n\ell)\\&
<\frac{1}{2^n}(2(2^n-1)+2)=2.\end{split}\end{equation}
Define
$$\psi(s,t)=\sum_{i\in\Z^s}\varphi(t-i)\varphi(s-i).$$
Then, since the refinable function is uniformly continuous, the property \eqref{eqn:pou} implies that for $n$ large enough,
\begin{equation}\label{eqn:aux3}\alpha_n=\inf_{\rho(t-s)<2^{-n+1}}\psi(s,t)>\eps>0.\end{equation}
By boundedness of $\Omega$ we also obtain
\begin{equation}\label{eqn:aux4}M=\sup_{t\in\R^s}|\Z^s\cap(t+\Omega)|<\infty.\end{equation}
Combining \eqref{eqn:pou} with \eqref{eqn:aux1} through \eqref{eqn:aux4} further gives

\begin{equation}\label{eqn:unif}D(S^nx)=\sup_{\rho(i-j)<2}d(S^nx_i,S^nx_j)\le\gamma_nD(x),\end{equation}
where $\gamma_n=(1-\alpha_n+2\eps_n+M^2\eps_n^2)$. Clearly, for $n_0$ large enough, $\gamma=\gamma_{n_0}<1$. Moreover, the estimate \eqref{eqn:unif} is uniform in $x$ (and even $d$). The same argument leading to the first inequality in \eqref{eqn:aux1} together with \eqref{eqn:aux2} provides
$$D(S^mx)\le D(S^kx)\quad\text{for }m\geq k.$$
Thus, for $n\in\N$ one concludes:

\begin{align*}
D(S^nx)&\le\gamma D(S^{n-n_0}x)\\
&\le\gamma^{\left[n/n_0\right]} D(S^{n-n_0\left[n/n_0\right]})\\
&\le\gamma^{\left[n/n_0\right]} D(x),
\end{align*}
which completes the proof.
\end{proof}

Recall that the \emph{tensor product} $(a\otimes b)_{i\in\Z^{s+t}}$of two masks $\mask$ and $(b_j)_{j\in\Z^t}$ is defined by 
$$(a\otimes b)_{(i,j)}=a_i\cdot b_j.$$

\pagebreak

\begin{Le}[\cite{Ebner}]\label{Le:B-spline}Suppose $S$ and the corresponding $D$ are as in Lemma \ref{Le:contr}. Define $(b_i)_{i\in\Z}$ via $b_0=1$, $b_{-1}=b_1=\textstyle\frac 1 2$, and $b_i=0$ for $|i|>1$. Let $T$ denote the barycentric scheme associated to the $s$-fold tensor power $b\otimes\dots\otimes b$. Then $T$ is Lipschitz in the sense of \eqref{eqn:Lip} and converges on any Hadamard space. Moreover, there is $C>0$ such that $d_\infty(Sx,Tx)\le C\cdot D(x)$ for all $x\in\ell^\infty(\Z^s,X)$.
\end{Le}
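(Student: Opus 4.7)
The proof breaks into three largely independent parts; I will handle the Lipschitz property first, then the convergence of $T$ (the real work), then the comparison estimate.

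The Lipschitz property \eqref{eqn:Lip} follows immediately from the mask structure: the product $(b\otimes\cdots\otimes b)_{i-2j}=\prod_\alpha b_{i_\alpha-2j_\alpha}$ vanishes unless every $j_\alpha$ lies in $\{\lfloor i_\alpha/2\rfloor,\lceil i_\alpha/2\rceil\}$, and on its support it takes the constant value $2^{-\kappa(i)}$, where $\kappa(i)$ is the number of odd entries of $i$. Consequently every $Tx_i$ is the barycenter of the uniform probability measure on a set $\{x_k:k\in I(i)\}$ of at most $2^s$ values. Applying the $W_2$-Lipschitz property of the barycenter map on Hadamard spaces to the diagonal coupling between the atomic measures associated to $x$ and $y$ yields $d(Tx_i,Ty_i)\le\max_{k\in I(i)}d(x_k,y_k)\le d_\infty(x,y)$.

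For the convergence of $T$---the substantive step---a direct mask computation gives the exact refinability identity $Ty_{2i}=y_i$, whence $T^{n+1}x_{2j}=T^n x_j$ by iteration: refinement introduces new values only at odd positions. Moreover the mask $b\otimes\cdots\otimes b$ satisfies the hypothesis of Lemma~\ref{Le:contr}---its support $\{-1,0,1\}^s$ sits inside the balanced cube $[-1,1]^s$ and the associated linear scheme converges to the tensor hat refinable function---so Lemma~\ref{Le:contr} supplies weak contractivity $D_T(T^nx)\le\gamma^{[n/n_0]}D_T(x)$ with some $\gamma<1$, where $D_T(x)=\sup_{\|i-j\|_\infty\le1}d(x_i,x_j)$. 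I then fix a continuous piecewise Fr\'echet-affine interpolant $f_n:\R^s\to X$ of $T^nx$ on the grid $\tfrac1{2^n}\Z^s$; concretely, Kuhn-triangulate the grid and on each simplex with vertices $v_0,\ldots,v_s$ and barycentric coordinates $\lambda_0,\ldots,\lambda_s$ define $f_n=\argmin\sum_k\lambda_k d^2(T^n x_{2^n v_k},\,\cdot\,)$. By the refinability identity $f_{n+1}$ agrees with $f_n$ at every level-$n$ lattice point, and within each level-$n$ cube both functions are built from input values lying in a common ball of radius $O(D_T(T^nx))$ around the corner value---here one also uses the non-increasingness of $D_T$ under $T$ noted in the proof of Lemma~\ref{Le:contr}. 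Since closed balls in Hadamard spaces are convex, both interpolants remain in that ball, so $\|f_n-f_{n+1}\|_\infty\le C\gamma^{[n/n_0]}D_T(x)$. Geometric summability makes $(f_n)$ Cauchy in $(C(\R^s,X),d_\infty)$; its continuous limit $f=T^\infty x$ fulfills $d_\infty(T^n x,f(\cdot/2^n))\le\|f_n-f\|_\infty\to 0$, i.e.\ \eqref{eqn:converge}.

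For the comparison estimate $d_\infty(Sx,Tx)\le C\cdot D(x)$, observe that both $Sx_i$ and $Tx_i$ are barycenters of probability measures $\mu_i^S,\mu_i^T$ supported on indices $k$ with $i-2k$ in a common balanced, convex, bounded set $\Omega'\supseteq\supp\fa\cup\{-1,0,1\}^s$. Pairwise distances within this finite window are bounded by a uniform multiple of $D(x)$, obtained by chaining integer increments of $\rho$-length less than $2$ along a path of bounded length inside the window; hence any coupling of $\mu_i^S$ with $\mu_i^T$ has $W_2$-cost at most $C\cdot D(x)$, and the barycenter contraction $d(Sx_i,Tx_i)\le W_2(\mu_i^S,\mu_i^T)$ together with the supremum over $i$ finishes the proof. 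The main obstacle will be the convergence step: one has to isolate an interpolation rule whose values on every grid cube stay within $O(D_T(T^nx))$ of that cube's corner data, so that the exact refinability of $T$ translates cleanly into the Cauchy estimate for $(f_n)$. The Kuhn-simplicial Fr\'echet-affine choice above is natural, but the cube-by-cube Hadamard-ball-convexity argument warrants explicit verification not supplied elsewhere in the paper.
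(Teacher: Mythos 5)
Your proposal is essentially correct, but there is nothing in the paper to compare it against: Lemma \ref{Le:B-spline} is imported verbatim from \cite{Ebner} and the present article offers no proof of it, so your argument stands or falls on its own. On its own it stands. The Lipschitz part is right: the tensor mask row at $i$ is the uniform distribution on the $2^{\kappa(i)}$ corners you describe, and Sturm's fundamental contraction property $d(b(\mu),b(\nu))\le W_1(\mu,\nu)\le W_2(\mu,\nu)$ applied to the diagonal coupling gives \eqref{eqn:Lip}; you should cite that contraction property explicitly, since it is the one nontrivial input and is not stated anywhere in this paper. The convergence argument is the substantive contribution and is sound: the interpolatory identity $T^{n+1}x_{2j}=T^nx_j$, the containment of all level-$(n{+}1)$ values over a level-$n$ cube in the closed ball of radius $D_\infty(T^nx)$ about a corner (legitimate because closed balls in Hadamard spaces are convex and barycenters of measures supported in a convex set stay in it), and the geometric decay of $D_\infty(T^nx)$ together give the Cauchy estimate. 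Two remarks on economy and on the one genuinely unverified step. First, invoking Lemma \ref{Le:contr} for the hat mask is much heavier than needed: a one-line computation gives $D_\infty(Tx)\le\tfrac12D_\infty(x)$ exactly (each new value is a barycenter of the corners of a single unit cube, hence lies in the ball of radius $\tfrac12D_\infty(x)$ about the Fr\'echet mean of any two adjacent configurations), so you get honest contractivity with $\gamma=\tfrac12$, $n_0=1$, without any appeal to the linear theory. Second, the continuity of your Kuhn-simplicial Fr\'echet-affine interpolant in the barycentric coordinates $\lambda$ is asserted but not proved; it does hold (the variance inequality $\sum_k\lambda_kd^2(p_k,z)\ge\sum_k\lambda_kd^2(p_k,z_\lambda)+d^2(z,z_\lambda)$ applied with the roles of $\lambda$ and $\lambda'$ interchanged yields a quantitative modulus), and the consistency across shared faces follows as you say because vanishing weights drop out of the minimization; but since you yourself flag this as the delicate point, it should be written out rather than deferred. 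Finally, in the comparison estimate it is cleaner to first pass from $D_\Omega$ to $D_\infty$ via the equivalence $rD_\Omega\le D_\infty\le RD_\Omega$ already used in Lemma \ref{Le:canonicalcontractivity}; the chaining inside the window then becomes trivial ($\ell^\infty$-steps of length one) and you avoid having to argue that an arbitrary balanced convex $\Omega\supseteq\supp(\fa)$ admits short lattice chains, which is really an admissibility hypothesis inherited from \cite{Ebner} rather than a consequence of the stated assumptions.
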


\begin{proof}[Proof of Theorem \ref{Th:main}]
Suppose $S$ denotes the barycentric scheme associated to the nonnegative mask $\mask$. Under the assumption that the linear counterpart of $S$ converges, combining Lemmas \ref{Le:contr} and \ref{Le:B-spline}, we find a function $D:\ell^\infty(\Z^s,X)\to\R_{\geq0}$, a convergent scheme $T:\ell^\infty(\Z^s,X)\to\ell^\infty(\Z^s,X)$, and constants $\gamma<1$ and $C\geq0$ such that
\begin{itemize}
\item[(i)] There is a positive integer $n_0$ such that $D(S^nx)\le \gamma^{[n/n_0]}D(x)$
for $x\in \ell^\infty(\Z^s,X)$ and $n\in\N$.

\item[(ii)]$T\in\Lip_1(\ell^\infty(\Z^s,X))$ is convergent.

\item[(iii)] $d_\infty(Sx,Tx)\le C\cdot D(x)$ for $x\in \ell^\infty(\Z^s,X)$.
\end{itemize}
Thus, by Theorem \ref{Th:converge}, the scheme $S$ converges.
\end{proof}

A well-known result from the linear theory is the following:

\begin{Pro}\label{Pro:contrconverge}
The \emph{univariate} and \emph{linear} scheme $\widetilde S$ associated to the mask $(a_i)_{i\in\Z}$ converges if and only if there is $\gamma<1$ and $C\geq0$ such that 
$$\sup_{i\in\Z}|\widetilde S^nx_i-\widetilde S^nx_{i+1}|\le C\cdot\gamma^n \sup_{i\in\Z}|x_i-x_{i+1}|\quad\text{ for all }n\in\N_0.$$
\end{Pro}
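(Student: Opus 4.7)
My plan is to use the classical \emph{derived scheme} device from univariate subdivision theory. The basic sum rule forces $\mathfrak{a}(-1)=\sum_i(-1)^ia_i=0$, so the symbol $\mathfrak{a}(z)=\sum_ia_iz^i$ factors as $\mathfrak{a}(z)=(1+z)\mathfrak{q}(z)$ for some Laurent polynomial $\mathfrak{q}$. Setting $\mathfrak{b}(z):=z\mathfrak{q}(z)$ defines a finitely supported mask $b$, and a direct symbol calculation establishes the intertwining
$$\Delta\widetilde S^nx=\widetilde S_b^n\Delta x\qquad(n\in\N_0),\quad (\Delta x)_i:=x_{i+1}-x_i.$$
The proposition is thereby equivalent to geometric decay of the operator norm $\|\widetilde S_b^n\|_{\ell^\infty\to\ell^\infty}$.

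For the sufficiency direction I would exhibit the limit function directly: the piecewise-linear interpolants $f_n$ of $\widetilde S^nx$ on $2^{-n}\Z$ form a Cauchy sequence in $C(\R,\R)$. The key estimate at grid points reads
$$|\widetilde S^{n+1}x_{2i}-\widetilde S^nx_i|=\Big|\sum_j a_{2i-2j}\bigl(\widetilde S^nx_j-\widetilde S^nx_i\bigr)\Big|\le K\|\Delta\widetilde S^nx\|_\infty\le KC\gamma^n\|\Delta x\|_\infty,$$
using only the basic sum rule and the finite support of $\fa$; an analogous bound holds at the odd grid points of $2^{-n-1}\Z$. Summing the resulting geometric series produces a uniform limit realising \eqref{eqn:converge}. (Alternatively, one may invoke Theorem~\ref{Th:converge} with $T$ the linear B-spline scheme of Lemma~\ref{Le:B-spline} and $D(x)=\|\Delta x\|_\infty$.)

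For the necessity direction I would fix the bounded input $x_j=\cf_{\{j\ge1\}}$, for which $\Delta x=\delta_{\{0\}}$. By Theorem~\ref{Th:lin}, $\widetilde S^nx_i\to\widetilde S^\infty x(i/2^n)$ uniformly in $i$, with $\widetilde S^\infty x(t)=\sum_{j\ge1}\varphi(t-j)$; as a locally finite sum of translates of the uniformly continuous, compactly supported refinable function $\varphi$, this limit is itself uniformly continuous on $\R$. Combining these observations yields
$$\|b^{(n)}\|_\infty=\|\widetilde S_b^n\delta_{\{0\}}\|_\infty=\|\Delta\widetilde S^nx\|_\infty\longrightarrow 0.$$
Since $\supp(b^{(n)})$ has size $O(2^n)$, each row of the $n$-step subdivision matrix of $\widetilde S_b^n$ contains at most some fixed $K'$ nonzero entries, independently of $n$. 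This upgrades the pointwise decay to the uniform bound $\|\widetilde S_b^n\|_{\ell^\infty\to\ell^\infty}\le K'\|b^{(n)}\|_\infty\to 0$. Choosing $n_0$ with $\|\widetilde S_b^{n_0}\|_\infty<1$ and using submultiplicativity gives $\|\widetilde S_b^n\|_\infty\le\mathrm{const}\cdot\gamma^n$, and applying this to $y=\Delta x$ delivers the claimed contraction.

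The principal obstacle is the final step of the necessity direction: converting \emph{pointwise} decay $\|\widetilde S_b^n\delta_{\{0\}}\|_\infty\to 0$ into \emph{uniform} operator-norm decay on $\ell^\infty$. The crucial input is the finiteness of $\supp(\fa)$, which controls the row-sparsity of $\widetilde S_b^n$ uniformly in $n$ and hence allows the operator norm to be dominated by the sup-norm of the $n$-step mask. This aspect of the argument is genuinely one-dimensional.
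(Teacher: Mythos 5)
The paper does not actually prove Proposition \ref{Pro:contrconverge}: it is introduced as ``a well-known result from the linear theory'' and used as a black box, with the classical sources cited at the start of Section \ref{sec:convergence}. So there is no in-paper argument to compare against; what you have written is a correct, self-contained rendition of the standard proof. Your derived-scheme bookkeeping is right: the sum rule gives $\mathfrak{a}(-1)=0$, and $\mathfrak{b}(z)=z\mathfrak{q}(z)$ is exactly the symbol solving $(z^{-1}-1)\mathfrak{a}(z)=(z^{-2}-1)\mathfrak{b}(z)$, i.e.\ $a_{k+1}-a_k=b_{k+2}-b_k$, which yields $\Delta\widetilde S=\widetilde S_b\Delta$. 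The step you correctly identify as the crux --- upgrading $\|\widetilde S_b^n\delta_{\{0\}}\|_\infty\to0$ to decay of the operator norm --- is sound: $\supp(b^{(n)})\subseteq[-N(2^n-1),N(2^n-1)]$, so each row $\bigl(b^{(n)}_{i-2^nj}\bigr)_{j\in\Z}$ has at most $2N+1$ nonzero entries, whence $\|\widetilde S_b^n\|_{\ell^\infty\to\ell^\infty}\le(2N+1)\|b^{(n)}\|_\infty$, and submultiplicativity converts $\|\widetilde S_b^{n_0}\|<1$ into geometric decay. Two minor points you should tidy rather than fix: the hypothesis of the Proposition only controls $\widetilde S_b^n$ on $\Delta(\ell^\infty)$, which is a proper subspace of $\ell^\infty$ (constant sequences are not differences of bounded ones), so the claimed ``equivalence'' with operator-norm decay needs the locality of $\widetilde S_b^n$ to truncate general data --- harmless here, since sufficiency uses only the bound on differences and your necessity argument proves the full operator-norm bound; and in the sufficiency direction one should observe that $f_n$ and $f_{n+1}$ are both piecewise linear with breakpoints in $2^{-n-1}\Z$, so the estimates at those grid points really do bound $\|f_{n+1}-f_n\|_\infty$. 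Within the paper's own machinery the quickest route to sufficiency is the one you mention parenthetically, namely Theorem \ref{Th:converge} with $D(x)=\sup_i|x_{i+1}-x_i|$ and $T$ the piecewise-linear scheme of Lemma \ref{Le:B-spline}.
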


Theorems \ref{Th:main} and \ref{Th:converge} along with Lemma \ref{Le:contr} put us in a position to generalize this statement to the setting of Hadamard spaces. Still need an easy auxiliary result.

\begin{Le}\label{Le:canonicalcontractivity}Suppose $(X,d)$ is a metric space, and let $$D_\infty(x)=\sup_{\|i-j\|_\infty\le 1}d(x_i,x_j).$$ Then a refinement scheme $S$ is weakly contractive with respect to an admissible contractivity function if and only if there is $\gamma<1$ and $C\geq0$ such that
$$D_\infty(S^nx)\le C\gamma^nD_\infty(x)$$
\end{Le}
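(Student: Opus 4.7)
My plan relies on two facts: $D_\infty$ is itself an admissible contractivity function --- it coincides with $D_\Omega$ for the unit cube $\Omega=(-1,1)^s$, whose Minkowski functional is $\|\cdot\|_\infty$ --- and any two admissible contractivity functions are bi-Lipschitz equivalent on $\ell^\infty(\Z^s,X)$. Granted these, both implications reduce to an elementary rearrangement of exponential factors.

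The bi-Lipschitz comparison rests on the observation that the Minkowski functional $\rho_\Omega$ of any balanced, convex, bounded $\Omega\subseteq\R^s$ with nonempty interior is a norm, hence equivalent to $\|\cdot\|_\infty$. Consequently the set $F_\Omega:=\{v\in\Z^s:\rho_\Omega(v)<2\}$ is finite, every $v\in F_\Omega$ decomposes into at most $M_1(\Omega)$ standard unit vectors, and every standard unit vector decomposes into at most $M_2(\Omega)$ elements of $F_\Omega$. Two applications of the triangle inequality then yield positive constants $c_1,c_2$ depending only on $\Omega$ with $c_1D_\infty(x)\le D_\Omega(x)\le c_2D_\infty(x)$ for every $x\in\ell^\infty(\Z^s,X)$.

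The forward direction is then immediate: substituting the bi-Lipschitz comparison into $D_\Omega(S^nx)\le\gamma^{[n/n_0]}D_\Omega(x)$ and using the elementary estimate $\gamma^{[n/n_0]}\le\gamma^{-1}(\gamma^{1/n_0})^n$ yields $D_\infty(S^nx)\le C(\gamma')^nD_\infty(x)$ with rate $\gamma':=\gamma^{1/n_0}<1$ and constant $C:=c_2/(c_1\gamma)$.

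For the converse, taking $D_\infty$ itself as the candidate admissible function, I would pick $n_0$ large enough that $\tilde\gamma:=\max(C,1)\gamma^{n_0}<1$. For $n=kn_0+r$ with $k=[n/n_0]\ge 1$, the hypothesis then gives $D_\infty(S^nx)\le C\gamma^nD_\infty(x)\le C(\gamma^{n_0})^kD_\infty(x)\le\tilde\gamma^kD_\infty(x)$, the last step using $C\le\max(C,1)^k$. The chief obstacle is the residual range $n<n_0$, on which the literal weak-contractivity bound $\gamma^{[n/n_0]}=1$ is not supplied by the hypothesis when $C>1$; this can be circumvented by replacing $\Omega=(-1,1)^s$ with a sufficiently dilated balanced convex set, whose associated admissible contractivity function is comparable to $D_\infty$ up to an arbitrarily large constant and therefore absorbs the finitely many uniform ratios $D_\infty(S^nx)/D_\infty(x)\le C$ for these initial $n$.
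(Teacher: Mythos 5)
Your forward direction and the large-$n$ part of the converse are correct and match the paper's computation: the bi-Lipschitz comparison $c_1D_\infty\le D_\Omega\le c_2D_\infty$ (which the paper quotes from \cite{Ebner} as $rD_\Omega\le D_\infty\le RD_\Omega$) together with $\gamma^{[n/n_0]}\le\gamma^{-1}(\gamma^{1/n_0})^n$ gives the exponential bound, and your estimate $C\gamma^n\le(\max(C,1)\gamma^{n_0})^{[n/n_0]}$ for $[n/n_0]\ge1$ is fine. You have also correctly located the one genuine difficulty: for $1\le n<n_0$ weak contractivity demands $D(S^nx)\le D(x)$, which the hypothesis $D_\infty(S^nx)\le C\gamma^nD_\infty(x)$ does not supply when $C\gamma^n>1$.

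However, the proposed repair --- dilating $\Omega$ so that $D_{\lambda\Omega}$ ``absorbs'' the constant $C$ --- does not work, and no purely metric rescaling can. First, the inequality runs the wrong way: one has $D_{\lambda\Omega}\le M_\lambda D_\infty$ with $M_\lambda\ge1$ \emph{growing} in $\lambda$ (chaining through more unit steps), so the best you get is $D_{\lambda\Omega}(S^nx)\le M_\lambda C\gamma^nD_{\lambda\Omega}(x)$, which is worse than before. Second, and more decisively, for two-valued step data $x$ (say $x_i=p$ for $i_1\le0$ and $x_i=q$ for $i_1\ge1$) every admissible contractivity function equals $d(p,q)$, so the ratio $D_{\lambda\Omega}(x)/D_\infty(x)$ is $1$ no matter how large $\lambda$ is; if a scheme satisfies $D_\infty(Sx)>D_\infty(x)$ on such data (which your hypothesis permits when $C\gamma>1$), then $D_\Omega(Sx)\ge D_\infty(Sx)>D_\Omega(x)$ for \emph{every} admissible $D_\Omega$, and monotonicity at $n=1$ fails outright. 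The paper closes this gap not by adjusting $\Omega$ but by using the barycentric structure of $S$: Jensen's inequality (Lemma \ref{Le:Jensen}) applied to the representation of Theorem \ref{Pro:main} yields, as in the proof of Lemma \ref{Le:contr}, the a priori monotonicity $D_\Omega\circ S^n\le D_\Omega$ for all $n$, which is exactly the missing bound on the residual range $n<n_0$. (Note the lemma's hypothesis ``refinement scheme on a metric space'' is misleadingly general here; the converse implication really uses that $S$ is barycentric with nonnegative mask.) As written, your converse direction is incomplete.
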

\begin{proof}
Suppose $S$ is weakly contractive with respect to $D_\Omega$, meaning there is $n_0\in\N$ and $\widetilde\gamma<1$ such that $D\circ S^n\le\widetilde\gamma^{(n/n_0)}D$. It is not difficult to see (cf. \cite{Ebner}) that there are $r,R>0$ such that
$$rD_\Omega\le D_\infty\le RD_\Omega.$$
Observe that, since $[n/n_0]\geq n/n_0-1$ one has $\widetilde\gamma^{[n/n_0]}\le\widetilde\gamma^{n/n_0-1}=\widetilde C\gamma^n$, where
$\widetilde C=\widetilde\gamma^{-1}$ and $\gamma=\widetilde\gamma^{1/n_0}<1$. Moreover define $C=\frac{R\widetilde C} r$. Then
\begin{align*}
D_\infty\circ S^n&\le RD_\Omega\circ S^n\le R\widetilde\gamma^{[n/n_0]}D_\Omega\\
&\le\frac R r \widetilde C \gamma^nD_\infty=C\gamma^nD_\infty.
\end{align*}
Now assume there is $\gamma<1$ and $C\geq0$ such that
$$D_\infty\circ S^n\le C\gamma^nD_\infty.$$
Choose $n_0\in\N$ such that $\frac {C R} r \gamma^{n_0}\le1$. From \eqref{eqn:Jensen} it follows that $D_\Omega\circ S^n\le D_\Omega$. On the other hand, for $n\geq n_0$ we have $[n/n_0]\le n-n_0$ and thus 
\begin{align*}
D_\Omega\circ S^n&\le \frac 1 r D_\infty\circ S^n\le \frac C r \gamma^nD_\infty\\
&\le\left(\frac {RC} r\gamma^{n_0}\right) \gamma^{n-n_0}D_\Omega\le\gamma^{[n/n_0]}D_\Omega.
\end{align*}
\end{proof}

We are now able to generalize Proposition \ref{Pro:contrconverge}:
\begin{Th}The refinement scheme associated to $\mask$ converges on arbitrary Hadamard spaces if and only if there is $C\geq0$ and $\gamma<1$ such that for all $(X,d)$ Hadamard
$$D_\infty^X(S^nx)\le C\cdot\gamma^n D_\infty(x)\quad\text{ for all }x\in\ell^\infty(\Z^s,X),$$
where, as above, $D_\infty(x)=\sup_{\|i-j\|_\infty\le1}d(x_i,x_j)$.
\end{Th}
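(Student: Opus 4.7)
The plan is to string together Theorem \ref{Th:main}, Lemma \ref{Le:contr}, and Lemma \ref{Le:canonicalcontractivity}, while paying close attention to the fact that the contractivity constants produced by those lemmas depend only on the mask $\fa$ and on a set $\Omega\supseteq\supp(\fa)$ --- never on the target Hadamard space $X$.

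For the forward direction, I would suppose that $S$ converges on every Hadamard space. Specializing to $X=\R$ makes the linear scheme $\widetilde S$ convergent, which is precisely the hypothesis of Lemma \ref{Le:contr}. Applied now to an arbitrary Hadamard space $X$, that lemma delivers weak contractivity of $S$ with respect to the admissible function $D_\Omega$; the integer $n_0$ and the rate $\widetilde\gamma<1$ that it produces are determined solely by the refinable function $\varphi=\widetilde S^\infty\delta_{\{0\}}$, by the approximation errors $\eps_n$ from \eqref{eqn:linconverge}, by the combinatorial constant $M$ of \eqref{eqn:aux4}, and by the lower bound $\alpha_n$ of \eqref{eqn:aux3} --- none of which see $X$. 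Lemma \ref{Le:canonicalcontractivity} then converts this into the desired estimate $D_\infty^X(S^nx)\le C\gamma^n D_\infty^X(x)$, with constants $C,\gamma$ depending only on the comparison ratios $r,R$ between $D_\Omega$ and $D_\infty$, which in turn depend only on $\Omega$.

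For the converse, I would specialize the assumed inequality to $X=\R$ and observe, via Theorem \ref{Th:lin}, that the restriction of $S$ to real-valued data coincides with the linear scheme $\widetilde S$. Thus $\widetilde S$ satisfies $D_\infty(\widetilde S^nx)\le C\gamma^n D_\infty(x)$ on $\R$, and Lemma \ref{Le:canonicalcontractivity} upgrades this to weak contractivity with respect to an admissible $D_\Omega$. Classical linear subdivision theory for schemes with nonnegative masks --- the multivariate counterpart of Proposition \ref{Pro:contrconverge}, documented in \cite{CDM,Prautzsch,Reif} --- yields convergence of $\widetilde S$ on $\R$. Theorem \ref{Th:main} then transfers convergence from $\R$ to every Hadamard space, finishing the argument.

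The principal potential obstacle is one of bookkeeping, not substance: verifying that the constants in the forward implication can indeed be chosen uniformly in $X$. This is transparent from the proof of Lemma \ref{Le:contr}, whose mechanism routes the $d$-distances on $X$ through Jensen's inequality \eqref{eqn:Jensen} into real-valued convex combinations against the transition probabilities $p^\fa_{n,m}$ of the characteristic Markov chain; the target space then drops out of the decisive estimates, which concern only the linear mask iterates $a^{(n)}$ and the refinable limit $\varphi$.
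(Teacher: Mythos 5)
Your proposal is correct, and your forward direction is exactly the paper's: convergence on all Hadamard spaces specializes to $X=\R$, Lemma \ref{Le:contr} then yields weak contractivity with respect to $D_\Omega$ with constants that, as the paper notes after \eqref{eqn:unif}, are uniform in $x$ and in the metric $d$, and Lemma \ref{Le:canonicalcontractivity} converts this into the stated $D_\infty$ estimate; your emphasis on the $X$-independence of the constants is precisely the bookkeeping point that makes the theorem nontrivial. For the converse you take a genuinely different route. The paper stays entirely inside the Hadamard framework: from the hypothesis, Lemma \ref{Le:canonicalcontractivity} gives weak contractivity with respect to an admissible $D_\Omega$ on an arbitrary Hadamard space, and Theorem \ref{Th:converge} (with the B-spline comparison scheme $T$ of Lemma \ref{Le:B-spline} supplying hypotheses (ii) and (iii)) then yields convergence directly, with no appeal to Theorem \ref{Th:main} or to the classical linear literature. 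You instead restrict to $X=\R$, identify $S$ with $\widetilde S$ via Theorem \ref{Th:lin}, invoke the multivariate analogue of Proposition \ref{Pro:contrconverge} from \cite{CDM,Prautzsch,Reif} to get linear convergence, and transfer back via Theorem \ref{Th:main}. This is valid --- the cited classical fact is standard for nonnegative masks satisfying the basic sum rule --- but it imports an external result where the paper's route is self-contained, and since Theorem \ref{Th:main} is itself proved by the very combination of Lemmas \ref{Le:contr}, \ref{Le:B-spline} and Theorem \ref{Th:converge}, your detour ultimately rests on the same machinery applied less directly. A small economy worth noting: once Lemma \ref{Le:canonicalcontractivity} has produced weak contractivity of $S$ with respect to $D_\Omega$ on a given Hadamard space, Theorem \ref{Th:converge} finishes the converse in one step for that space, so no passage through $\R$ is needed at all.
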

\begin{proof}
This follows from combining Lemma \ref{Le:canonicalcontractivity} with Lemma \ref{Le:contr} and Theorem \ref{Th:converge}.
\end{proof}

We conclude this section with an approximation result for Lipschitz functions:

\begin{Th}
Suppose $f:(\R^s,\|\cdot\|)\to (X,d)$ is Lipschitz-continuous with constant $C>0$, and $S$ is a convergent barycentric scheme whose mask is supported on $\{x\in\R^s\mid\|x\|\le r\}$. Sample $f$ on the grid $h\Z^s$, $h>0$, via $x_i=f(hi)$. Then 
$$d_\infty(S^\infty x(h^{-1}\cdot),f(\cdot))\le rC\cdot h.$$
\end{Th}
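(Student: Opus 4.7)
My plan is to exploit the Markov chain representation of Theorem \ref{Pro:main} together with the conditional Jensen inequality (Lemma \ref{Le:Jensen}) in order to directly compare $S^n x_i$ with $f(hi/2^n)$, and then pass to the limit in $n$. The crucial geometric input is the inclusion $\supp(a^{(n)})\subseteq(2^n-1)\Omega$ used in the proof of Lemma \ref{Le:contr}, with $\Omega=\{z\in\R^s:\|z\|\le r\}$; consequently, starting from $X_0^\fa=i$, the only values $j$ reachable by $X_n^\fa$ with positive probability satisfy $\|j-i/2^n\|<r$.

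I would fix $n\in\N$ and $i\in\Z^s$, set $z_0:=f(hi/2^n)\in X$, and apply Lemma \ref{Le:Jensen} to the convex continuous function $z\mapsto d(z,z_0)$, with $Y=x\circ X_n^\fa$ and the filtration $(\F_k)_{0\le k\le n}$ associated to the characteristic Markov chain. In view of Theorem \ref{Pro:main}, this yields
$$d(S^n x_i, f(hi/2^n)) \le E\bigl(d(f(hX_n^\fa),f(hi/2^n))\bigm| X_0^\fa=i\bigr).$$
I would then invoke the Lipschitz assumption on $f$ to dominate the integrand by $Ch\|X_n^\fa-i/2^n\|$, and combine this with the support bound above to conclude
$$d(S^n x_i,f(hi/2^n))\le Chr(1-2^{-n})<Chr.$$

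For the final step, combining this with the convergence of $S$ and writing $\eps_n=d_\infty(S^\infty x(\cdot/2^n),S^n x)$ (with $\eps_n\to 0$, uniformly in $i$ by the very definition \eqref{eqn:converge}), the triangle inequality gives
$$d(S^\infty x(i/2^n),f(hi/2^n))\le\eps_n+Chr.$$
For any dyadic rational $s=i_0/2^{n_0}\in\R^s$, the identity $s=(2^{n-n_0}i_0)/2^n$ lets me send $n\to\infty$ while keeping $s$ on the grid, and the above inequality specialises to $d(S^\infty x(s),f(hs))\le Chr$. Continuity of both $S^\infty x$ and $f$ then extends this estimate to arbitrary $s=t/h$, yielding $d_\infty(S^\infty x(h^{-1}\cdot),f(\cdot))\le rCh$. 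I do not foresee any real obstacle: the Jensen step plus the Lipschitz bound plus the combinatorial support estimate combine cleanly, and the only subtlety is making sure the error $\eps_n$ is uniform in $i$, which is built into the definition of convergence.
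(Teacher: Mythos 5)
Your proposal is correct and follows essentially the same route as the paper's own proof: Jensen's inequality combined with the Markov-chain representation of $S^n$, the support estimate $\supp(a^{(n)})\subseteq(2^n-1)\Omega$, and the Lipschitz bound, followed by passing to the limit along dyadic points. The paper merely writes the conditional expectation explicitly as the weighted sum $\sum_j a^{(n)}_{2^{n-k}i-2^nj}\,d(x_j,f(hi/2^k))$ and keeps the dyadic point fixed via the index $2^{n-k}i$, exactly as in your final step.
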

\begin{proof}
Suppose $n\in\N_0$ and $i\in\Z^s$. Then by Lemma \ref{Le:Jensen} and Theorem \ref{Pro:main} we obtain
\begin{align*}d(S^nx_{2^{n-k}i},f(hi/2^k))&\le\sum_{\|2^{n-k}i-2^nj\|\le (2^n-1)r}a^{(n)}_{2^{n-k}i-2^nj}d(x_j,f(hi/2^k))\\
&\le\sup_{\|i/2^k-j\|\le (1-2^{-n})r}d(f(hj),f(hi/2^k))\\
&\le rC\cdot h,\end{align*}
from which the claim follows.
\end{proof}

\section{$L^p$-convergence of the characteristic Markov chain}

This short section clarifies the relationship between the stochastic convergence of the Markov chain associated to $\mask$ and its counterpart in the theory of barycentric subdivision schemes. 

\begin{Le}\label{Le:Ball}
Suppose $\supp(\fa)\subseteq C\cap\Z^s$, where $C$ is a convex, balanced, and compact set. Let $\rho:\R^s\to\R_{\geq0}$ denote the Minkowski functional of $C$. Recall the notation $\prob_i$ for the probability measure on $(\Z^s)^{\N_0}$ induced by the transition kernel $P^\fa$ and the initial distribution $\delta_{\{i\}}$. Then $$\rho(i)\le2^n\quad\Longrightarrow\quad\prob_i(X_n^\fa\in 2C)=1.$$ In other words, the Markov chain with deterministic initial condition $X_0^\fa=i$ reaches $2C$ within $[\log_2(\rho(i))]+1$ steps and remains in this set thereafter.
\end{Le}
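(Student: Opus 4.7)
The plan is to reduce the claim to a deterministic statement about admissible trajectories of the Markov chain. Since the transition kernel satisfies $p^{\fa}_{k,k+1}(i,j) = a_{i-2j}$, a sequence $(i_0,i_1,\dots,i_n)$ has positive $\prob_i$-probability only if $i_0 = i$ and $i_{k-1} - 2 i_k \in \supp(\fa) \subseteq C$ for every $k$, equivalently $\rho(i_{k-1} - 2i_k) \le 1$. It therefore suffices to bound $\rho(i_n)$ along all such admissible paths.

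The main step exploits the fact that, because $C$ is convex and balanced, $\rho$ is a seminorm on $\R^s$. Writing
$$i_k = \tfrac{1}{2}(2 i_k - i_{k-1}) + \tfrac{1}{2}\, i_{k-1}$$
and using sublinearity together with absolute homogeneity of $\rho$ (noting that $-C = C$) yields the one-step estimate
$$\rho(i_k) \le \tfrac{1}{2}\rho(i_{k-1} - 2 i_k) + \tfrac{1}{2}\rho(i_{k-1}) \le \tfrac{1}{2} + \tfrac{1}{2}\rho(i_{k-1}).$$
Substituting $s_k := \rho(i_k) - 1$ turns this into the purely geometric recursion $s_k \le s_{k-1}/2$, which iterates to
$$\rho(i_n) \le 1 + \frac{\rho(i) - 1}{2^n}.$$

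Under the hypothesis $\rho(i) \le 2^n$ this yields $\rho(i_n) \le 2 - 2^{-n} < 2$, i.e.\ $i_n \in 2C$, which is the main assertion. For the forward-invariance part, the same one-step inequality applied to any state $j$ with $\rho(j) \le 2$ gives $\rho(j') \le 3/2 < 2$ for every admissible successor $j'$, so the chain cannot leave $2C$ once it has entered. Finally, taking $n = [\log_2 \rho(i)] + 1$ guarantees $\rho(i) < 2^n$ and therefore places $X^\fa_n$ in $2C$ almost surely, which settles the entrance-time statement. I do not expect any genuine obstacle here: the whole argument reduces to a careful application of the seminorm properties of $\rho$ to the support constraint imposed by the transition kernel, combined with a trivial first-order linear recurrence.
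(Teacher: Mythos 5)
Your argument is correct and rests on the same two ingredients as the paper's proof: the support constraint $\supp(\fa)\subseteq C$ imposed on admissible transitions, and the subadditivity and absolute homogeneity of the Minkowski functional $\rho$. The only difference is organizational: the paper applies a single triangle inequality to the $n$-step transition probabilities, using $a^{(n)}_{i-2^nj}\neq0\Rightarrow\rho(i-2^nj)\le2^n$ (which follows from the containment $\supp(a^{(n)})\subseteq(2^n-1)C$ established via the Chapman--Kolmogorov recursion), whereas you iterate the one-step estimate $\rho(i_k)\le\tfrac12+\tfrac12\rho(i_{k-1})$ along trajectories --- an equivalent induction which, as a bonus, reproves the sharper pointwise bound $\rho(X_n^\fa)\le1+(\rho(i)-1)/2^n$ that the paper uses again in the proof of Theorem \ref{Th:Lp}.
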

\begin{proof}
Recall that since $\supp(\fa)\subseteq C$, for any $j\in\Z^s$ we obtain

$$a_{i-2^nj}^{(n)}\neq0\quad\Longrightarrow\quad \rho(i-2^nj)\le 2^n.$$
Thus the fact that $\rho(i)/2^n\le1$ renders the right hand side of

$$\prob_i(X^\fa_n\in\Z^s\setminus 2C)=\sum_{\rho(j)>2}a_{i-2^nj}^{(n)}$$
an empty sum, since $\rho(i-2^nj)\le2^n$ implies $$\rho(j)\le\rho(i-2^nj)/2^n+\rho(i)/2^n\le2.$$
\end{proof}

\begin{Th}\label{Th:Lp}
Let $p\in[1,\infty)$. Suppose the characteristic Markov chain $X_n^\fa$ of $\mask$ with deterministic initial condition $\ell\in\Z^s$ possesses a stationary distribution $\pi$ in the sense that for all $j\in\Z^s$, $|a^{(n)}_{\ell-2^nj}-\pi_j|\to0$ as $n\to\infty$. Then $X_n^\fa$ converges in $L^p(\Omega,\prob_\ell;\R^s)$ if and only if there is $k\in\Z^s$ such that $\pi=\delta_{k}$. In this case,
$$E_\ell(\|X_n^\fa-k\|^p)\to0\quad\text{as }n\to\infty.$$
\end{Th}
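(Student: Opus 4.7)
Both directions of Theorem \ref{Th:Lp} rest on Lemma \ref{Le:Ball}, which confines $X_n^\fa$ to the finite set $2C\cap\Z^s$ for all $n > [\log_2\rho(\ell)]$, where $C$ is a balanced, convex, compact set containing $\supp(\fa)$ and $\rho$ is its Minkowski functional. In particular, $\prob_\ell$-almost surely one has
\[
\|X_n^\fa - k\|\le M := \max\{\|j-k\|:j\in 2C\cap\Z^s\},
\]
so the sequence is $L^\infty$-dominated and $L^p$-convergence is equivalent to convergence in distribution.

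For the easy direction, I would assume $\pi = \delta_k$. The convergence hypothesis then reads $\prob_\ell(X_n^\fa = k) = a^{(n)}_{\ell - 2^n k} \to 1$, so combining with the uniform bound above yields
\[
E_\ell(\|X_n^\fa - k\|^p) \le M^p\,\prob_\ell(X_n^\fa \neq k) \longrightarrow 0,
\]
which is the $L^p$-convergence claimed, together with the identification of the limit as $k$.

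For the converse, suppose $X_n^\fa \to Y$ in $L^p$. The $L^\infty$-bound upgrades this to convergence in distribution, and comparing with the hypothesis $|a^{(n)}_{\ell - 2^n j} - \pi_j|\to 0$ identifies the law of $Y$ as $\pi$. It remains to show that $Y$ is $\prob_\ell$-a.s. equal to some deterministic $k\in\Z^s$, so that $\pi = \delta_k$; this I expect to be the main obstacle. The plan is to exploit the discrete nature of the differences $\|X_n^\fa - X_m^\fa\|$, which take only the values $0$ or $\ge 1$, so the $L^p$-Cauchy property forces $\prob_\ell(X_n^\fa \neq X_m^\fa) \to 0$. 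Feeding this back through the one-step transitions $p(i,j) = a_{i-2j}$ together with the invariance of $\pi$ under these transitions should then pin $\pi$ down as a single atom and simultaneously identify the $L^p$-limit with the corresponding deterministic point $k$.
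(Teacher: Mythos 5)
Your first direction and the uniform bound via Lemma \ref{Le:Ball} are exactly what the paper does: once the chain is confined to a fixed finite set, $\prob_\ell(X_n^\fa=k)\to1$ gives $E_\ell(\|X_n^\fa-k\|^p)\le M^p\,\prob_\ell(X_n^\fa\neq k)\to0$. (A small caveat on your framing: for a bounded lattice-valued sequence, $L^p$-convergence is \emph{not} equivalent to convergence in distribution in general --- only convergence to a \emph{constant} upgrades in this way --- but your actual computation uses only the correct implication.)

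The converse is where the content lies, and there your sketch has a genuine gap. The reduction of the $L^p$-Cauchy property to $\prob_\ell(X_n^\fa\neq X_m^\fa)\to0$ is fine, since lattice values make $\|X_n^\fa-X_m^\fa\|\geq1$ on the event of disagreement. But the mechanism you propose to finish --- feeding this through the \emph{one-step} transitions together with invariance of $\pi$ --- cannot pin $\pi$ down to an atom. Taking $m=n+1$ gives $\prob_\ell(X_{n+1}^\fa=X_n^\fa)=\sum_j\prob_\ell(X_n^\fa=j)\,a_{-j}\to\sum_j\pi_j a_{-j}$, and forcing this limit to equal $1$ yields only that $a_{-j}=1$ for every $j\in\supp\pi$, i.e.\ that every point of $\supp\pi$ is an \emph{absorbing} state of the kernel $(a_{i-2j})$. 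Any probability distribution supported on absorbing states is automatically invariant under that kernel, so invariance adds no further constraint, and nothing so far excludes two distinct absorbing states each carrying positive mass. The same objection applies to comparisons over any fixed number of steps.

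The paper instead compares $X_n^\fa$ with $X_{2n}^\fa$ over a \emph{growing} time gap: the joint law factorizes as $\prob_\ell(X_{2n}^\fa=i,\,X_n^\fa=j)=a^{(n)}_{j-2^ni}a^{(n)}_{\ell-2^nj}$, and driving both $n$-step factors to $\pi_i$ and $\pi_j$ respectively shows that $E_\ell(\|X_{2n}^\fa-X_n^\fa\|^p)$ is asymptotically bounded below by $\sum_{i,j}\|i-j\|^p\pi_i\pi_j$ up to errors controlled by the paper's $\eps_n$; this is positive unless $\pi$ is a Dirac measure. This asymptotic product structure of the two-time marginal is the idea missing from your sketch. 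Note that it requires convergence of the $n$-step transition probabilities \emph{started from the intermediate states $j$}, not merely from $\ell$ --- exactly the input that no finite-step analysis of the kind you propose can supply.
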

\begin{proof}
Let $\rho$ denote the Minkowski functional of a balanced, closed and convex set containing $\supp(\fa)$. Moreover, for $n\in\N$ define $$A_n=\{(i,j)\in\Z^s\times\Z^s\mid\max(\rho(\ell-2^nj),\rho(j-2^ni))\le2^n-1\}.$$
Note that $(i,j)\in A_n$ implies that $\rho(j)\le1+\frac{\rho(\ell)-1}{2^n}$ as well as $\rho(i)\le1+\frac{\rho(\ell)-1}{2^{2n}}$. Thus there is a bounded set $B$ such that
$$\bigcup_{n\in\N}A_n\subseteq B.$$
Moreover we have
\begin{equation}\begin{split}\int_\Omega\|X_{2n}^\fa(\omega)-X_n^\fa(\omega)\|^p\prob_\ell(d\omega)&=\sum_{i,j\in\Z^s}\|i-j\|^p\prob_\ell(X_{2n}^\fa=i\wedge X_n^\fa=j)\\
&=\sum_{i,j\in\Z^s}\|i-j\|^p\prob_\ell(X_{2n}^\fa=i|X_n^\fa=j)\prob_\ell(X_n^\fa=j)\\
&=\sum_{(i,j)\in A_n}\|i-j\|^pa^{(n)}_{j-2^ni}a^{(n)}_{\ell-2^nj}.\end{split}\end{equation}
Certainly, since $B$ is bounded, the sequence
$$\eps_n:=\sup_{(i,j)\in B}(|\pi_i-a_{j-2^ni}^{(n)}|+|\pi_j-a^{(n)}_{\ell-2^nj}|)$$
converges to zero as $n\to\infty$. Consequently, we obtain $E_\ell(\|X_{2n}^\fa-X_n^\fa\|^p)\geq c_n$, where

\begin{equation}\label{eqn:noncon}\begin{split}c_n&=\sum_{(i,j)\in A_n}\|i-j\|^p\pi_i\pi_j\\
&\quad-\eps_n\sum_{(i,j)\in A_n}\|i-j\|^p(a^{(n)}_{j-2^{n}i}+a^{(n)}_{\ell-2^nj})\\
&\quad-\eps_n^2\sum_{(i,j)\in A_n}\|i-j\|^p.
\end{split}\end{equation}
Thus, whenever there are integers $i\neq j$ such that $\pi_i>0$ and $\pi_j>0$, Equation \eqref{eqn:noncon} implies that
$E_\ell(\|X_{2n}^\fa-X_n^\fa\|)$ is bounded away from zero asymptotically. Hence for $L^p$-convergence of $X_n^\fa$ we need the existence of some $k\in\Z^s$ with $\pi_i=\delta_{ki}$.

Conversely, assume that $\pi=\delta_{\{k\}}$. Then since $X_n^\fa\to k$ in distribution, $X_n^\fa\to k$ in probability. From Lemma \ref{Le:Ball} we conclude that there is $M>0$ such that $\|X_n^\fa\|\le M$ holds $\prob_\ell$-almost surely for $n\in\N$. Hence for $\delta>0$,

\begin{align*}
E_\ell(\|X_n^\fa-k\|^p)&\le\int_{\{\|X_n^\fa-k\|\geq\delta\}}\|X_n^\fa-k\|^pd\prob_\ell+\int_{\{\|X_n^\fa-k\|<\delta\}}\|X_n^\fa-k\|^pd\prob_\ell\\
&\le(M+k)^p\prob_\ell(\|X_n^\fa-k\|\geq\delta)+\delta^p,
\end{align*} 
showing that $E_\ell(\|X_n^\fa-k\|)$ converges to zero.
\end{proof}

Suppose now that the subdivision scheme associated to $\mask$ converges. Then a refinable function $\varphi$ satisfying \eqref{eqn:refine} and \eqref{eqn:pou} exists. Substituting $i\in\Z^s$ for $t$ in 

$$\varphi(t)=\sum_ja_j\varphi(2t-j)$$
and exploiting the fact that $\sum_i\varphi(i)=1$, we observe that $\pi_i=\varphi(-i)$ is a stationary distribution for $X_n^\fa$. Moreover recall that a convergent scheme is called \emph{interpolatory} if and only if there is $k\in\Z^s$ such that for $j\in\Z^s$, $\varphi(j)=\delta_{kj}$. Now Theorem \ref{Th:Lp} translates to the language of refinement schemes as follows:

\begin{Cor}
Suppose the linear scheme associated to $\mask$ converges, and $p\in[1,\infty)$. Then the characteristic Markov chain of $X^\fa_n$ with deterministic initial condition $\ell\in\Z^s$ converges in $L^p(\Omega,\prob_\ell;\R^s)$ if and only if the scheme is interpolatory. In this case the $L^p$-limit is a constant lattice point.
\end{Cor}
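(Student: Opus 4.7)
The plan is to obtain the corollary as an almost immediate translation of Theorem~\ref{Th:Lp}, taking advantage of the stationary distribution $\pi_i=\varphi(-i)$ for the characteristic Markov chain identified in the paragraph preceding the statement. First I would verify that the hypothesis of Theorem~\ref{Th:Lp} is satisfied, i.e.\ that $|a^{(n)}_{\ell-2^nj}-\pi_j|\to 0$ for each $j$. For this I invoke the linear convergence estimate \eqref{eqn:linconverge} established during the proof of Lemma~\ref{Le:contr}, which gives $\sup_i|a^{(n)}_i-\varphi(i/2^n)|\to 0$; specializing $i=\ell-2^nj$ yields
$$|a^{(n)}_{\ell-2^nj}-\varphi(\ell/2^n-j)|\to 0$$
uniformly in $j$, and since $\ell/2^n\to 0$ and $\varphi$ is uniformly continuous (being continuous with compact support), one concludes $\varphi(\ell/2^n-j)\to\varphi(-j)=\pi_j$.

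I would then briefly justify that $\pi$ is a genuine probability measure, so that Theorem~\ref{Th:Lp} actually applies: nonnegativity follows from $\varphi$ being the uniform limit of the nonnegative scheme applied to the nonnegative input $\delta_{\{0\}}$, while $\sum_j\pi_j=\sum_j\varphi(-j)=1$ is the partition-of-unity identity \eqref{eqn:pou} evaluated at $t=0$.

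With these verifications in hand, Theorem~\ref{Th:Lp} tells me that $X_n^\fa$ converges in $L^p(\Omega,\prob_\ell;\R^s)$ if and only if $\pi=\delta_{\{k\}}$ for some $k\in\Z^s$, in which case the $L^p$-limit is the constant lattice point $k$. The remaining step is to rephrase $\pi=\delta_{\{k\}}$ in terms of $\varphi$: it amounts to $\varphi(-i)=\delta_{ki}$ for $i\in\Z^s$, that is, $\varphi$ vanishes on $\Z^s\setminus\{-k\}$ and takes the value $1$ at $-k$. This is exactly the interpolatory condition from the discussion preceding the corollary, with interpolating lattice point $-k$.

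I do not expect any serious obstacle; the argument is purely a translation step. The only point that requires a little care is ensuring that the convergence $a^{(n)}_{\ell-2^nj}\to\pi_j$ happens in a strong enough sense to legitimately invoke Theorem~\ref{Th:Lp}, but this is delivered at once by the uniform estimate \eqref{eqn:linconverge} combined with uniform continuity of the compactly supported refinable function $\varphi$.
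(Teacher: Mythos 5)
Your proposal is correct and follows the same route as the paper, which states the corollary without a separate proof and relies precisely on the preceding observation that $\pi_i=\varphi(-i)$ is the stationary distribution together with Theorem \ref{Th:Lp} and the definition of interpolatory schemes. Your explicit verification of the hypothesis $|a^{(n)}_{\ell-2^nj}-\pi_j|\to0$ via \eqref{eqn:linconverge} and the uniform continuity of the compactly supported $\varphi$ is a welcome filling-in of a step the paper leaves implicit, and the identification of the interpolating lattice point as $-k$ is accurate.
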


\bibliographystyle{amsplain}

\bibliography{Markov}

\end{document}